\newcommand{\bs}[1]{\boldsymbol{#1}}
\newcommand{\R}{\mathbbm{R}}
\newcommand{\N}{\mathbbm{N}}
\newtheorem{proposition}{Proposition}
\renewcommand\hl[1]{#1}
\DeclareMathOperator*{\argmax}{\arg\!\max}
\newcolumntype{L}{@{}l@{}} 
\newcommand{\mc}[1]{\multicolumn{1}{c}{#1}} 
\title{Adaptive Density Tracking by Quadrature\\ for Stochastic Differential Equations}
\author[1]{Ryleigh A. Moore\thanks{Corresponding author
email: rmoore@math.utah.edu\\
R. Moore and A. Narayan were partially supported by AFOSR under award FA9550-20-1-0338. A. Narayan is partially supported by NSF DMS-1848508.}}
\author[1]{Akil Narayan}
\affil[1]{Scientific Computing and Imaging Institute, and Department of Mathematics,
University of Utah}
\begin{document}
\maketitle
\begin{abstract}
  \noindent Density tracking by quadrature (DTQ) is a numerical procedure for computing solutions to Fokker-Planck equations that describe probability densities for stochastic differential equations (SDEs). In this paper, we extend upon existing trapezoidal quadrature rule DTQ procedures by utilizing a flexible quadrature rule that allows for unstructured, adaptive meshes. We describe the procedure for $N$-dimensions, and demonstrate that the resulting adaptive procedure can be significantly more efficient than the trapezoidal DTQ method. \hl{We show examples of our procedure for problems ranging from one to five dimensions}.\\ 
  
  \noindent Keywords: stochastic differential equations, Leja points, numerical methods
\end{abstract}
\section{Problem History and Background}
Stochastic differential equations (SDEs) are prevalent in many areas of research. Kloeden and Platen \cite{kloeden1992applications} outline a variety of SDE uses, including population dynamics, protein kinetics, psychology problems involving neuronal activity, investment finance and option pricing, turbulent diffusion of a particle, radio-astronomy and the analysis of stars, helicopter rotor and satellite orbit stability, biological waste treatment with analysis of air and water quality, seismology and structural mechanics, the stability of materials prone to fatigue cracking, and blood clotting dynamics and cellular energetics. In this paper, we are interested in solving SDEs and their associated Fokker-Planck equations. 

\subsection{Stochastic Differential Equations}
\label{SDESection}
Let $\mathbf{W}_t$ be an $N$-dimensional Wiener Process and $\mathbf{X}_t$ be an $N$-dimensional vector stochastic It$\widehat{\text{o}}$ diffusion process governed by the SDE
\begin{equation}
\label{eqn:SDE}
d\mathbf{X}_t=\textbf{f}(\mathbf{X}_t, t)dt+\mathbf{g}(\mathbf{X}_t, t)d\mathbf{W}_t
\end{equation}
with the drift $\textbf{f}(\mathbf{X}_t,t)$ as an $N$-dimensional vector and the diffusion defined by an $N \times N$-dimensional matrix $\textbf{g}(\mathbf{X}_t, t)$. This equation is endowed with a $t=0$ initial condition $\bs{X}_0$.

The evolution of the probability density function for $\mathbf{X}_t$ is governed by the corresponding Fokker-Planck partial differential equation (PDE).
The Fokker-Planck equation for the evolution of the probability density $p(\mathbf{x},t)$ of the random variable $\mathbf{X}_t$ from \cref{eqn:SDE} is given by
\begin{equation}
    \frac{\partial}{\partial t}p(\mathbf{x},t) = -\sum_{i=1}^N \frac{\partial}{\partial x^{(i)}}\left[\mathbf{f}_i(\mathbf{\mathbf{x}},t)p(\mathbf{x},t)\right] + \sum_{i, j=1}^N \frac{\partial^2}{\partial x^{(i)} \partial x^{(j)}}\left[\mathbf{D}_{i,j}(\mathbf{x},t)p(\mathbf{x},t)\right]
    \label{eqn:FPE}
\end{equation}
where $\mathbf{x} = (x^{(1)}, \dots, x^{(N)})^T$.
The diffusion tensor $\bs{D}$ is related to the SDE diffusion $\bs{g}$ by
\begin{equation*}
    \mathbf{D}_{i,j}(\mathbf{x},t) = \frac{1}{2}\sum_{\ell=1}^N \mathbf{g}_{i,\ell}(\mathbf{x},t)\mathbf{g}_{j,\ell}(\mathbf{x},t),
\end{equation*}
see, e.g., \cite[p.~5]{Risken}.
Since $p(\mathbf{x},t)$ is a probability density function, it satisfies a normalization condition
\begin{equation*}
    \int_{\mathbb{R}^N} p(\mathbf{x},t)d\mathbf{x} =1.
\end{equation*}
This paper focuses on numerical approximation of the time-dependent probability density function $p(\mathbf{x},t)$ governed by \cref{eqn:FPE}.

\subsection{Current Methods}\label{CurrentMethods}
Several methods have been developed to numerically approximate the statistics of the SDE's solution $\mathbf{X}_t$, or the probability density function $p(\mathbf{x},t)$ from the associated Fokker-Planck PDE in \cref{eqn:FPE}. Perhaps among the more straightforward approaches is through Monte Carlo simulation \cite{platen2010monte}, which typically collects a large ensemble of realizations of $\bs{X}_t$ from \cref{eqn:SDE}. The large number of samples needed to sufficiently approximate the solution of the SDE makes using this method with sufficient accuracy computationally expensive.

Many numerical methods have been developed to compute solutions to the Fokker-Planck equation, such as finite element methods (FEMs) \cite{Pichler, Bergman1982, Langley1985, spencer1993numerical, masud2004multiscale, kumar2006partition, wojtkiewicz2000numerical,Wojtkiewicz} and finite difference methods (FDMs) \cite{Pichler, Wojtkiewicz, Kumar2006}. FEMs are often preferable over FDMs to solve the Fokker-Plank equation because of their accuracy and stability; however, they can be more complicated to implement compared to FDMs. Current FDMs are empirically less numerically stable than FEMs, but they also usually require less memory and computational power to implement \cite{Pichler, Wojtkiewicz}. Both FEMs and FDMs suffer from the curse of dimensionality stemming from the computational difficulty of forming a sufficiently dense mesh in $N$ dimensions. When using FDMs or FEMs, erroneous oscillations and negative values can arise if the drift is large compared to the diffusion. One method to address this challenge utilizes a moving finite element mesh where basis functions, which depend on time instead of only on space, are used to eliminate the spurious oscillations  \cite{Harrison1988}. Some adaptive FEM procedures monitor regions of non-negligible probability and adjust the mesh coarseness appropriately \cite{cotter2013adaptive}. Adaptive FEM procedures also adjust the mesh based on the local value and gradient of $p$ near boundary regions \cite{Razi2011}. 

Additionally, finite volume methods (FVMs) have been applied to the conservation form of the Fokker-Plack equation, utilizing a linear multistep method for temporal discretization \cite{Ferm2004}. Such procedures can be made to adaptively adjust the mesh and time step based on an error tolerance criterion.  
Deep learning approaches have also been leveraged to numerically approximate solutions to the Fokker-Planck equation. If a large amount of training data is available, neural networks can be used to learn solution behavior \cite{xu2020ML}. Of course, this requires availability of such training data, and guaranteeing generalizability and accuracy with such approaches is often difficult. 

The curse of dimensionality is a concern with current methods because the required memory and computational cost increases substantially with the dimension $N$ of the problem. FEMs and finely discretized FDMs have been used to solve four-dimensional problems \cite{wojtkiewicz2000numerical, Wojtkiewicz}, but more work is needed for higher dimensional problems to become tractable.

In this research, we extend current density tracking by quadrature methods to approximate the PDF of SDEs in high dimensions. DTQ has also been described previously as numerical path integration (NPI) \cite{wehner1983numerical, NAESS1993, Yu1997} and has been applied to many different disciplines including engineering \cite{NAESS1993} and finance \cite{RosaClot1999, Skaug2007, Linetsky1997}. A transformed path integral approach is discussed in \cite{Subramaniam2017}. Results on the stability, consistency, and convergence of NPI/DTQ under certain conditions are given in \cite{chen2018, Bhat2018}. In one dimension, DTQ has been shown to be a convergent method that computes an approximation to the probability density function $p(\mathbf{x},t)$ of $\mathbf{X}_t$ on a discrete grid. In some examples, DTQ is 100 times faster compared to other methods with similar accuracy \cite{Bhat2018}, making it very appealing for further study.

\subsection{Outline and Contributions of this Paper}

We work to augment current DTQ algorithms by implementing an accurate and flexible interpolatory quadrature rule, along with adaptive mesh updates, to minimize the computational cost. Our quadrature rule allows for an $N$-dimensional, unstructured mesh that provides flexibility to allocate mesh points to areas of high density and to remove mesh points from areas of low density. The unstructured mesh allows for nontensorial discretizations and partially addresses the curse of dimensionality. 

We first summarize the current DTQ method which utilizes a structured mesh and a trapezoidal quadrature rule \cite{Bhat2018}. Then, we discuss and fully detail our adaptive DTQ method, which uses an interpolatory quadrature rule on Leja points. Finally, we compare the two procedures.

\section{Density Tracking by Quadrature}
We present DTQ in the framework of $N$-dimensional SDEs in \cref{eqn:SDE}. For a fixed temporal step size $h > 0$, we first discretize the SDE in \cref{eqn:SDE} in time using the Euler-Maruyama method, which results in the equation
 \begin{equation}
    \label{eqn:E-M}
     \mathbf{\widetilde{X}}_{n+1}=\mathbf{\widetilde{X}}_n+\mathbf{f}(\mathbf{\widetilde{X}}_n,t)h+\mathbf{g}(\mathbf{\widetilde{X}}_n,t)\sqrt{h}\mathbf{Z}_{n+1}.
\end{equation}
Here, $\mathbf{\widetilde{X}}_n$ represents an approximation of the state $\mathbf{X}_t$ at time $t_n=nh$ and $\mathbf{Z}_{n+1}$ is a standard $N$-dimensional normal random variable (i.e., 0 mean, identity covariance). 

Now, we interpret the time discretized \cref{eqn:E-M} as a discrete-time Markov chain. Let $\widetilde{p}(\mathbf{x},t_n)$ denote the PDF at location $\mathbf{x}$ at time $t_n$ of the Markov chain. From \cref{eqn:E-M}, we observe that the conditional density of $\mathbf{\widetilde{X}}_{n+1}$ given $\mathbf{\widetilde{X}}_n=\mathbf{y}$ is Gaussian with mean $\boldsymbol{\widetilde \mu}=\mathbf{y}+\mathbf{f}(\mathbf{y})h$ and covariance $\boldsymbol{ \widetilde \Sigma} = h\mathbf{g(y)g(y)}^T$, notated as

\begin{equation*}
P(\mathbf{\widetilde{X}}_{n+1}=\mathbf{x}~|~\mathbf{\widetilde{X}}_n=\mathbf{y})
:=G(\mathbf{x},\mathbf{y}; \mathbf{\widetilde \mu} ,\mathbf{\widetilde \Sigma}),
\end{equation*}
where
\begin{equation}
\label{eqn:Gxy}
G(\mathbf{x},\mathbf{y}) :=
G(\mathbf{x}, \mathbf{y};\boldsymbol{\widetilde \mu} ,\boldsymbol{\widetilde \Sigma})) :=\frac{1}{\sqrt{(2\pi)^{N}|\boldsymbol{\widetilde\Sigma}|}} \exp \left(-\frac{1}{2}\left(\mathbf{x}-\boldsymbol{\widetilde \mu}\right)^T \mathbf{\widetilde \Sigma}^{-1}\left(\mathbf{x}-\boldsymbol{\widetilde \mu}\right)\right).
\end{equation}
Notice that $G(\mathbf{x},\mathbf{y})$ depends on the drift $\mathbf{f}$ and diffusion $\mathbf{g}$ through $\boldsymbol{\widetilde \mu}$ and $\boldsymbol{\widetilde \Sigma}$, but we will omit this dependence notationally.

The evolution of the density of the Markov chain is described by the associated Chapman-Kolmogorov equation,
\begin{equation}
    \begin{aligned}
 \widetilde{p}(\mathbf{{x}},t_{n+1})&=\int_{\mathbb{R}^N} P(\mathbf{\widetilde{X}}_{n+1}=\mathbf{x}~|~\mathbf{\widetilde{X}}_n=\mathbf{y})\widetilde{p}(\mathbf{y},t_n)d\mathbf{y}\\
 &=\int_{ \mathbb{R}^N} G(\mathbf{x},\mathbf{y})\widetilde{p}(\mathbf{y},t_n)d\mathbf{y}.
\end{aligned}
\label{eqn:C-K}
\end{equation}

The next step is to approximate the evolution of $\widetilde{p}(\mathbf{x},t_n)$ by discretizing \cref{eqn:C-K} in space. Let $\{\mathbf{y}_1, \dots, \mathbf{y}_s\}$ be a set of global mesh points where we will track the density. Then, $\widetilde{p}$ can be approximated by $\widehat{p}$ via a discretization of \cref{eqn:C-K},
\begin{equation}
\label{eqn:DiscreteC-K}
    \begin{aligned}
      \widehat{p}\left( \mathbf{y}_j, t_{n+1} \right) = \sum_{i=1}^m G(\mathbf{y}_j,\boldsymbol{\eta}_i)\widehat{p}(\boldsymbol{\eta}_i,t_{n})\omega_i
\end{aligned}
\end{equation}
where $\{\omega_i\}_{i=0}^m$ are quadrature weights and $\{\boldsymbol \eta_i\}_{i=0}^m$ are quadrature nodes. The initial condition is given as $\widehat{p}(\mathbf{y}_j, 0) = \widetilde{p}(\mathbf{y}_j, 0)$. 

One-dimensional DTQ has previously been analyzed and error estimates were established \cite{Bhat2018, chen2018}. Convergence of $p$ to $\widetilde{p}$, when using the Euler-Maruyama method \cite{BALLYTALAY+1996+93+128}, was used to help show that, for one-dimensional DTQ, the density $\widehat{p}$ converges in $L_1$ exponentially to the exact density of the Markov chain $\widetilde{p}$, and $\widehat{p}$ converges to the exact
density of $p$ with a first-order convergence rate \cite{Bhat2018}. Furthermore, DTQ has been used for parameter inference problems \cite{Bhat2018a}, and a two-dimensional implementation was employed to analyze basketball tracking data from the National Basketball Association \cite{bhat20162DTQ}.

\subsection{Simple One-Dimensional Interpretation of DTQ}
\label{1DTrapDTQ}
One way to approximate \cref{eqn:DiscreteC-K} is using a trapezoidal quadrature rule \cite{Bhat2018}. We consider a one-dimensional problem with an equispaced mesh, $\{\widehat{y}_1, \dots, \widehat{y}_q\} \subset \mathbb{R}$, which has a spatial step size $\kappa$. 
Then, the density at a point $\widehat{y}_j \in \{\widehat{y}_i\}_{i=1}^q$ is updated using the trapezoidal quadrature rule
\begin{equation}
    \widehat{p}(\widehat{y}_j,t_{n+1}) = \kappa \sum_{i=1}^q G(\widehat{y}_j,\widehat{y}_i)\widehat{p}(\widehat{y}_i,t_{n}).
    \label{eqn:TransitionMat}
\end{equation}
Mathematically, \cref{eqn:TransitionMat} can also be written as the matrix vector multiply
\begin{equation*}
    \mathbf{{P}}_{n+1}= \kappa\mathbf{G}\mathbf{P}_n
\end{equation*}
where $\mathbf{P}_{n+1} = \left[\widehat{p}(\widehat{y}_1,t_{n+1}), \dots, \widehat{p}(\widehat{y}_q,t_{n+1})\right]^T$ and $\mathbf{G}_{i,v}= G(\widehat{y}_i,\widehat{y}_v)$.

\subsection{Tensorized DTQ}\label{ssec:tensor-DTQ}
In more than one dimension, $N > 1$, a straightforward choice for the mesh is a tensorial grid, e.g., an isotropic grid is formed from the tensorization of a univariate grid,
\begin{align*}
  \left\{ \mathbf{\widehat{y}}_i \right\}_{i=1}^s &= \bigotimes_{\ell=1}^N \left\{ \widehat{y}_1, \ldots, \widehat{y}_q \right\}, & \left\{ \widehat{y}_1, \ldots, \widehat{y}_q \right\} &\subset \mathbb{R}.
\end{align*}
In this case, the discretization of \cref{eqn:DiscreteC-K} can proceed dimension by dimension. If the univariate grid $\{\widehat{y}_i\}_{i=1}^q$ is equispaced with mesh stepsize $\kappa > 0$, then \cref{eqn:DiscreteC-K} can be written as
\begin{equation*}
    \begin{aligned}
\widehat{p}(\mathbf{\widehat{y}}_j,t_{n+1}) = \kappa^N\sum_{i=1}^s G(\mathbf{\widehat{y}}_j,\mathbf{\widehat{y}}_i)\widehat{p}(\mathbf{\widehat{y}}_i,t_{n}).
\end{aligned}
\end{equation*}
In vector form, the above is
\begin{align*}
  \mathbf{P}_{n+1}&= \kappa^N\mathbf{G}\mathbf{P}_n, & \mathbf{G}_{i,v} &= G(\mathbf{\widehat{y}}_i, \mathbf{\widehat{y}}_v),
\end{align*}
where $\mathbf{P}_{n+1} \coloneqq \left[\widehat{p}(\mathbf{\widehat{y}}_1,t_{n+1}), \dots, \widehat{p}(\mathbf{\widehat{y}}_s,t_{n+1})\right]^T$. The matrix $\kappa^N\mathbf{G}$ contains values describing the movement of density; however, it is not a Markov transition matrix in general \cite{Bhat2018}.

The numerical solution $\widehat{p}$ can, in principle, be directly computed using this procedure; however, this can become expensive quickly as the dimension increases. For higher dimensional problems, we require $q^N$ mesh points for the tensorization strategy. For example, if we need $100$ points per dimension, in four dimensions we will need $10^8$ points, which is computationally prohibitive. In order to help extend DTQ to higher dimensions, we provide an a different strategy to discretize the integral in \cref{eqn:C-K}, which allows for an unstructured set of mesh points.

\section{DTQ on an Unstructured Mesh}
We will now describe our procedure for implementing DTQ on an unstructured mesh in $N$ dimensions. We utilize an unstructured, adaptive mesh and an interpolatory quadrature rule to approximate the integral in \cref{eqn:C-K} by treating a portion of the integrand as a Gaussian density. For each point in the global unstructured mesh, $\mathbf{y}_j \in \{\mathbf{y}_1, \dots, \mathbf{y}_s\}$,
we compute quadrature nodes and weights for an interpolatory quadrature rule. We denote the quadrature nodes as $\{\boldsymbol \eta_1, \dots, \boldsymbol \eta_m\}$, where we suppress the $j$ dependence since the procedure updates one mesh point at a time (eg.
$\{\boldsymbol \eta_1, \dots, \boldsymbol \eta_m\}=\{\boldsymbol \eta_1, \dots, \boldsymbol \eta_m\}_j$).

Now we will update the density associated to a member of the global mesh $\mathbf{y}_j$,
     \begin{align}
      \widetilde{p}(\mathbf{y}_j, t_n)
      &= \int_{\mathbb{R}^N} G(\mathbf{y}_j,\mathbf{y})\widetilde{p}(\mathbf{y},t_n)d\mathbf{y}\label{eqn:GptildeIntegral}\\
      \label{eqn:LaplaceFit}&= \int_{\mathbb{R}^N} r(\mathbf{y})\mathcal{N}(\mathbf{y};\boldsymbol{\mu},\boldsymbol{\Sigma})d\mathbf{y}
      \end{align}
so that 
      \begin{equation}
          \label{eqn:QuadRuleEta} \widehat{p}(\mathbf{y}_j,t_n)=\sum_{i=1}^m r(\boldsymbol{\eta}_i) \widehat{w}_{i}
      \end{equation}
     where 
    \begin{equation*}
    \label{mathcalN}
        \mathcal{N}(\mathbf{x}; \boldsymbol{\mu}, \boldsymbol{\Sigma})= \frac{1}{\sqrt{\pi^N| \boldsymbol{\Sigma}|}} \exp{\left(-\left(\mathbf{x}-\boldsymbol{\mu}\right)^T \boldsymbol{\Sigma}^{-1} \left(\mathbf{x}-\boldsymbol{\mu}\right)\right)}.
    \end{equation*}
\Cref{QuadFit}, describes how we effect the integral in \cref{eqn:LaplaceFit} by using a Laplace approximation of the integrand in \cref{eqn:GptildeIntegral} to identify $\boldsymbol{\mu}$ and $\boldsymbol \Sigma$ which subsequently allows us to define the weight function $\mathcal{N}$ (through $\boldsymbol{\mu}$ and $\boldsymbol \Sigma$) and the new integrand $r$. 
These values differ for each $\mathbf{y}_j$ (i.e., $\boldsymbol{\mu} = \boldsymbol{\mu_j},  \boldsymbol{\Sigma} = \boldsymbol{\Sigma_j}$, $r=r_j$, but we suppress this $j$ dependence). Then, \cref{ssec:iquad} details the identification of the quadrature weights in \cref{eqn:QuadRuleEta}, and \cref{LejaPointsSection} details the selection of the quadrature nodes.

\subsection{Laplace Approximation via Least Squares}
\label{QuadFit}
In this section, we describe how $r$ and $\mathcal{N}$ in \cref{eqn:LaplaceFit} are determined. 
We identify $\mathcal{N}$ as a Laplace approximation to the integrand of \cref{eqn:GptildeIntegral}, which we implement practically by performing a local least-squares quadratic fit to the log-integrand using nearby mesh points.

The Laplace approximation is computed for each point in the mesh. We consider a specific global mesh point $\mathbf{y}_j$ for this discussion. Let $\mathfrak{N}$ be the set of points used for the Laplace approximation. We note that $\mathfrak{N}$ depends on the point $\mathbf{y}_j$ (i.e. $\mathfrak{N}$ = $\mathfrak{N}_j$) but we suppress the $j$ dependence. When available, the quadrature nodes $\{\boldsymbol \eta_1, \dots, \boldsymbol \eta_m\}$, which were used at the previous time step to update the density at $\mathbf{y}_j$, are used. Otherwise, we use a set of nearest neighbor points to $\mathbf{y}_j$ (including itself). When needed, the nearest neighbors are determined using the Euclidean distance. The size of the set $\mathfrak{N}$ is formalized in \cref{sec:results}. 
In the beginning stages of the procedure, quadrature nodes are not known, so we use $\mathbf{y}_j$'s nearest neighbors. After quadrature nodes are known, we typically use them in place of the nearest neighbors. In this section, we will assume the use of $\{\boldsymbol \eta_1, \dots, \boldsymbol \eta_m\}$ for notational simplicity; however, the procedure is equivalent if nearest neighbors are used instead. 

Now, we will compute the Laplace approximation. Let the $i^{th}$ component of the vector $\boldsymbol \psi$ be given as
\begin{equation*}
\boldsymbol{\psi}_{i} := -\log(G(\mathbf{y}_j,
\boldsymbol{\eta}_i)\widehat{p}(\boldsymbol \eta_i,t_{n})), \hspace{1cm} i=1,\dots, m
\end{equation*}
so that $\boldsymbol{\psi}$ is an $m$-dimensional vector. The Laplace approximation will model this log-integrand as a quadratic polynomial,
\begin{equation}
 \boldsymbol{\psi}_{i} \approx {\widetilde{\psi}}(\boldsymbol{\eta}_i) \coloneqq c + \mathbf{d}^T \boldsymbol{\eta}_{i} + (\boldsymbol{\eta}_{i})^T \mathbf{A} \boldsymbol{\eta}_{i},
\label{Logapprox}
\end{equation} 
for a scalar $c$, vector $\mathbf{d} \in \mathbb{R}^N$, and a symmetric matrix $\mathbf{A} \in \mathbb{R}^{N \times N}$ that we identify via least-squares polynomial approximation. 

To describe this procedure, we require more notation.
Let $\alpha \in \N_0^N$ be a multi-index with the standard convention,
\begin{align*}
  \alpha &= (\alpha_1, \ldots, \alpha_N), & |\alpha| &\coloneqq \sum_{\ell=1}^N \alpha_\ell, & \boldsymbol{\eta}^\alpha = \prod_{\ell=1}^N \left( \eta^{(\ell)} \right)^{\alpha_\ell},
\end{align*}
with $\boldsymbol{\eta} = (\eta^{(1)}, \ldots, \eta^{(N)})^T$. Then, define 
\begin{align*}
  \mathcal{S}_k &\coloneqq \mathrm{span} \left\{ \boldsymbol{\eta}^\alpha \;\;\big|\;\; \alpha \in \Upsilon_k\right\}, &  \dim \mathcal{S}_k = |\Upsilon_k|,
\end{align*}
where we take $\Upsilon_k$ 
to be the set of multi-indices corresponding to a degree-$k$ approximation,
\begin{align*}
  \Upsilon_k &\coloneqq \left\{ \alpha \in \N_0^N \;\;\big|\;\; |\alpha| \leq k \right\}, & |\Upsilon_k| =& {N+k \choose N}.
\end{align*}
We will perform a quadratic fit with $k = 2$. We use $\alpha^{(1)}, \ldots, \alpha^{(|\Upsilon_2|)}$, an enumeration of the elements of $\Upsilon_2$,  to form a Vandermonde matrix, $\mathbf{M} \in \mathbb{R}^{m \times |\Upsilon_2|}$ defined as,
\begin{align*}
  \mathbf{M}_{i,v} = \boldsymbol{\eta}_i^{\alpha^{(v)}}
\end{align*}
Now, a least-squares fit to the data $\boldsymbol \psi$ is the emulator,
\begin{align*}
  \widetilde{\psi}(\boldsymbol{\eta}) &= \sum_{v=1}^{|\Upsilon_2|} \widehat{\tau}_v \boldsymbol{\eta}^{\alpha^{(v)}}, & \boldsymbol{\widehat{\tau}} &= (\widehat{\tau}_1, \ldots, \widehat{\tau}_{|\Upsilon_2|})^T,
\end{align*}
where $\widehat{\boldsymbol{\tau}}$ is given as the least-squares solution to the linear system,
\begin{align*}
  \mathbf{M} \boldsymbol{\widehat{\tau}} = \boldsymbol{\psi}.
\end{align*}
Once the coefficients $\widehat{\boldsymbol{\tau}}$ are computed, we translate $\widetilde{\psi}$ into the symmetric quadratic form in \cref{Logapprox} using the following identification of the entries of $c$, $\mathbf{d}$ and $\mathbf{A}$, 
\begin{align*}
  c &= \boldsymbol{\widehat{\tau}}_{\mathcal{I}(0)}, & \mathbf{d}_\ell &= \boldsymbol{\widehat{\tau}}_{\mathcal{I}(\mathbf{e}_\ell)}, & \mathbf{A}_{\ell,u} &= \frac{1}{2 - \delta_{\ell,u}} \boldsymbol{\widehat{\tau}}_{\mathcal{I}(\mathbf{e}_\ell + \mathbf{e}_u)}
\end{align*}
where $\delta_{\ell,u}$ is the Kronecker delta, $\mathbf{e}_\ell \in \N_0^N$ is the cardinal unit vector in direction $\ell$ with entry 1 in location $\ell$ and zeros elsewhere, and $\mathcal{I}(\alpha)$ is a function that returns the linear index in $\Upsilon_2$ associated to $\alpha$,
\begin{align*}
  v = \mathcal{I}(\alpha) \hskip 10pt \Longrightarrow \hskip 10pt \alpha = \alpha^{(v)}.
\end{align*}

In order to associate this quadratic fit with a normal distribution, the matrix $\mathbf{A}$ must be positive-definite.
We will explain in \cref{AltMethodSection} how we address situations when $\mathbf{A}$ is not positive-definite. 
However, when $\mathbf{A}$ is positive-definite, we have the following immediate identification of a density $\mathcal{N}$ from this quadratic fit to the log-integrand:
\begin{proposition}\label{prop:laplace-approx}
  If $\mathbf{A}$ in \cref{Logapprox} is positive-definite, then 
  \begin{align}
  \label{eqn:Fit}
  \exp(-{\widetilde{\psi}}(\boldsymbol \eta)) = C \exp\left( - \left(\boldsymbol{\eta} - \boldsymbol{ \mu}\right)^T \boldsymbol{\Sigma}^{-1} \left(\boldsymbol{\eta} - \boldsymbol{\mu}\right)\right),
  \end{align}
  where
  \begin{align}
  \label{eq:laplace-mu-sigma}
    \boldsymbol{{\mu}} = -\frac{1}{2} \mathbf{U} \boldsymbol{\Lambda}^{-1} \mathbf{d} \hspace{1cm} \boldsymbol{{\Sigma}}^{-1} = \mathbf{A} \hspace{1cm} C = \exp(-c + \frac{1}{4} \mathbf{d}^T \boldsymbol{\Lambda}^{-1} \mathbf{d}))
  \end{align}
\end{proposition}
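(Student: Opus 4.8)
The plan is to establish \cref{eqn:Fit} by completing the square in the exponent of $\exp(-\widetilde{\psi}(\boldsymbol{\eta}))$ and then reading off the three claimed quantities $\boldsymbol{\mu}$, $\boldsymbol{\Sigma}^{-1}$, and $C$. First I would record that the positive-definiteness hypothesis on $\mathbf{A}$ does real work: it guarantees that $\mathbf{A}^{-1}$ exists and is itself symmetric positive-definite, so that the quadratic part $-\boldsymbol{\eta}^T \mathbf{A} \boldsymbol{\eta}$ is negative-definite (making $\mathcal{N}$ a genuine, integrable Gaussian) and every inversion below is well defined. Starting from the definition of $\widetilde{\psi}$ in \cref{Logapprox}, the exponent is
\[
-\widetilde{\psi}(\boldsymbol{\eta}) = -\boldsymbol{\eta}^T \mathbf{A} \boldsymbol{\eta} - \mathbf{d}^T \boldsymbol{\eta} - c .
\]

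Next I would complete the square. I seek $\boldsymbol{\mu}$ with $\boldsymbol{\eta}^T \mathbf{A} \boldsymbol{\eta} + \mathbf{d}^T \boldsymbol{\eta} = (\boldsymbol{\eta} - \boldsymbol{\mu})^T \mathbf{A} (\boldsymbol{\eta} - \boldsymbol{\mu}) - \boldsymbol{\mu}^T \mathbf{A} \boldsymbol{\mu}$. Expanding the right-hand side and using the symmetry of $\mathbf{A}$, the cross term is $-2\boldsymbol{\mu}^T \mathbf{A} \boldsymbol{\eta}$, so matching the linear term $\mathbf{d}^T \boldsymbol{\eta}$ forces $\mathbf{A}\boldsymbol{\mu} = -\tfrac{1}{2}\mathbf{d}$, i.e. $\boldsymbol{\mu} = -\tfrac{1}{2}\mathbf{A}^{-1}\mathbf{d}$, and the purely quadratic term matches automatically with $\boldsymbol{\Sigma}^{-1} = \mathbf{A}$. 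Collecting the $\boldsymbol{\eta}$-independent remainder gives the constant $-c + \boldsymbol{\mu}^T \mathbf{A} \boldsymbol{\mu}$; substituting $\boldsymbol{\mu}$ and again using symmetry of $\mathbf{A}^{-1}$ yields $\boldsymbol{\mu}^T \mathbf{A} \boldsymbol{\mu} = \tfrac{1}{4}\mathbf{d}^T \mathbf{A}^{-1}\mathbf{d}$, whence $\exp(-\widetilde{\psi}(\boldsymbol{\eta})) = C\exp(-(\boldsymbol{\eta}-\boldsymbol{\mu})^T \mathbf{A}(\boldsymbol{\eta}-\boldsymbol{\mu}))$ with $C = \exp(-c + \tfrac{1}{4}\mathbf{d}^T \mathbf{A}^{-1}\mathbf{d})$.

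It remains to put these expressions into the eigendecomposition form displayed in \cref{eq:laplace-mu-sigma}. Here I would introduce the spectral decomposition $\mathbf{A} = \mathbf{U}\boldsymbol{\Lambda}\mathbf{U}^T$, where $\mathbf{U}$ is orthogonal and $\boldsymbol{\Lambda}$ is diagonal with strictly positive entries (again a consequence of positive-definiteness, which also shows $\boldsymbol{\Lambda}^{-1}$ exists). Then $\mathbf{A}^{-1} = \mathbf{U}\boldsymbol{\Lambda}^{-1}\mathbf{U}^T$, so that $\boldsymbol{\mu} = -\tfrac{1}{2}\mathbf{U}\boldsymbol{\Lambda}^{-1}\mathbf{U}^T\mathbf{d}$ and $\mathbf{d}^T\mathbf{A}^{-1}\mathbf{d} = (\mathbf{U}^T\mathbf{d})^T \boldsymbol{\Lambda}^{-1}(\mathbf{U}^T\mathbf{d})$, which recovers the stated $\boldsymbol{\mu}$ and $C$ once the expressions are read in the rotated coordinate $\mathbf{U}^T\mathbf{d}$.

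The substance of the argument is entirely the completion of the square, which is routine; I do not anticipate a genuine obstacle. The only point requiring care is the bookkeeping in the last paragraph: one must track the orthogonal factors $\mathbf{U}$ and $\mathbf{U}^T$ consistently so that $\mathbf{A}^{-1}\mathbf{d}$ and $\mathbf{d}^T\mathbf{A}^{-1}\mathbf{d}$ are correctly rewritten through the eigenbasis, since the compact forms in \cref{eq:laplace-mu-sigma} implicitly express the linear coefficient in the eigencoordinates of $\mathbf{A}$.
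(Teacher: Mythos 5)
Your proof is correct, and it reaches the result by a genuinely different (and arguably cleaner) route than the paper. You complete the square directly in matrix form, obtaining $\boldsymbol{\mu} = -\tfrac{1}{2}\mathbf{A}^{-1}\mathbf{d}$, $\boldsymbol{\Sigma}^{-1} = \mathbf{A}$, and $C = \exp\bigl(-c + \tfrac{1}{4}\mathbf{d}^T\mathbf{A}^{-1}\mathbf{d}\bigr)$, and only afterwards invoke the spectral decomposition $\mathbf{A} = \mathbf{U}\boldsymbol{\Lambda}\mathbf{U}^T$ as bookkeeping to translate into the form of \cref{eq:laplace-mu-sigma}. The paper proceeds in the reverse order: it diagonalizes first, changes variables to $\boldsymbol{\gamma} = \mathbf{U}^T\boldsymbol{\eta}$, completes the square one eigen-coordinate at a time (this is where positive-definiteness enters for the paper, through $\lambda_\ell > 0$ permitting the terms $\sqrt{\lambda_\ell}$), and then maps back to $\boldsymbol{\eta}$. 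Your approach buys two things. First, the decomposition plays no role in the actual argument, so positive-definiteness is used only where it is genuinely needed (invertibility of $\mathbf{A}$ and integrability of the resulting Gaussian). Second, and more importantly, your final paragraph exposes a point the paper's proof elides: literally one has $\boldsymbol{\mu} = -\tfrac{1}{2}\mathbf{U}\boldsymbol{\Lambda}^{-1}\mathbf{U}^T\mathbf{d}$ and $\mathbf{d}^T\mathbf{A}^{-1}\mathbf{d} = (\mathbf{U}^T\mathbf{d})^T\boldsymbol{\Lambda}^{-1}(\mathbf{U}^T\mathbf{d})$, so the compact expressions in \cref{eq:laplace-mu-sigma} are correct only if the $\mathbf{d}$ appearing there is read as the rotated coefficient $\mathbf{U}^T\mathbf{d}$. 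In the paper's proof this same issue is hidden in the step where $\mathbf{d}^T\boldsymbol{\eta}$ is rewritten as $\mathbf{d}^T\boldsymbol{\gamma}$, which is true only if one silently renames $\mathbf{U}^T\mathbf{d}$ as $\mathbf{d}$; your closing remark about tracking the orthogonal factors identifies exactly this discrepancy.
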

\begin{proof}
Since $\boldsymbol{A}$ is symmetric (by construction) and positive-definite, it has an orthogonal diagonalization
\begin{equation*}
  \mathbf{A} = \mathbf{U} \boldsymbol{\Lambda} \mathbf{U}^T, \hspace{1cm} \mathbf{U} \mathbf{U}^T = \mathbf{I}, \hspace{1cm} \boldsymbol{\Lambda} = \mathrm{diag}(\lambda_1, \ldots, \lambda_N)
\end{equation*}
with positive eigenvalues $\lambda_\ell > 0$ for all $\ell$. Define $\boldsymbol{\gamma} \coloneqq \mathbf{U}^T \boldsymbol{\eta}$, then
\begin{equation*}
  \widetilde{{{\psi}}}(\boldsymbol{\eta}) = c + \mathbf{d}^T \boldsymbol{\gamma} + (\boldsymbol{\gamma})^T \boldsymbol{\Lambda} \boldsymbol{\gamma}.
\end{equation*}
Let $d^{(\ell)}$ and $\gamma^{(\ell)}$ be the components of $\mathbf{d}$ and $\boldsymbol{\gamma}$, then a rearrangement yields
\begin{align*}
 {\widetilde{\psi}(\boldsymbol{\eta})} &= c + \sum_{\ell=1}^N \left(d^{(\ell)} \gamma^{(\ell)} + \lambda_\ell (\gamma^{(\ell)})^2 \right) \\
            &= c - \sum_{\ell=1}^N \frac{(d^{(\ell)})^2}{4 \lambda_\ell} + \sum_{\ell=1}^N \left(\sqrt{\lambda_\ell} \boldsymbol{\gamma}^{(\ell)} + \frac{d^{(\ell)}}{2\sqrt{\lambda_\ell}} \right)^2\\
            &= c - \frac{1}{4} \mathbf{d}^T \boldsymbol{\Lambda}^{-1} \mathbf{d} + \left({\gamma} + \frac{1}{2} \boldsymbol{\Lambda}^{-1} \mathbf{d} \right)^T \boldsymbol{\Lambda} \left( \boldsymbol{\gamma} + \frac{1}{2} \boldsymbol{\Lambda}^{-1} \mathbf{d} \right) \\
            &= c - \frac{1}{4} \mathbf{d}^T \boldsymbol{\Lambda}^{-1} \mathbf{d} + \left( \boldsymbol{\eta} + \frac{1}{2} \mathbf{U} \boldsymbol{\Lambda}^{-1} \mathbf{d} \right)^T \mathbf{A} \left( \boldsymbol{\eta} + \frac{1}{2} \mathbf{U} \boldsymbol{\Lambda}^{-1} \mathbf{d} \right),
\end{align*}
and we achieve \cref{eqn:Fit}, with ${\boldsymbol{\mu}}$, ${\boldsymbol{\Sigma}}$, and $C$ as given in \cref{eq:laplace-mu-sigma}.
\end{proof}

Using the Laplace approximation, we specify the weight function $\mathcal{N}$ using $\boldsymbol{\mu}$ and $\boldsymbol{\Sigma}$ from \cref{eq:laplace-mu-sigma} so that
\begin{align}
    \label{quadFitIntegral}
    \widetilde{p}(\mathbf{y}_j,t_{n+1})&=
    \int_{\mathbb{R}^N} r(\mathbf{y}) \mathcal{N}(\mathbf{y}; {\boldsymbol{\mu}}, {\boldsymbol{\Sigma}}) d{\mathbf{y}}, &
    r(\mathbf{y}) &= \frac{G(\mathbf{y}_j,\mathbf{y})\widetilde{p}(\mathbf{y},t_n)}{\mathcal{N}(\mathbf{y}; \boldsymbol{{\mu}},\boldsymbol{{\Sigma})}}.
\end{align}
We can approximate this integral using a quadrature rule like in \cref{eqn:QuadRuleEta} with quadrature nodes $\{\boldsymbol{\eta}_1, \dots, \boldsymbol{\eta}_m\}$.

\subsubsection{Alternative Method}
\label{AltMethodSection}
In some situations we cannot use the above Laplace approximation procedure. For example, \Cref{prop:laplace-approx} requires that $\mathbf{A}$ be positive-definite, which may occasionally not occur in practice. When $\mathbf{A}$ is not positive-definite, we use the alternative method shown in \cref{AltIntegral}. Additionally, we use the alternative method in the case that the quadrature nodes selected via the procedure described in \cref{LejaPointsSection} make the quadrature rule described in \cref{ssec:iquad} ill-conditioned. 
In practice, when the alternative method is used, it is typically to update the density of mesh points at or near the mesh boundary.

\hl{For the alternative method, we use a temporary set of quadrature nodes denoted as $\{\boldsymbol{\eta}^*_1, \dots, \boldsymbol{\eta}^*_m\}$. Note, the quadrature nodes for the alternative procedure are such that $\{\boldsymbol{\eta}^*_1, \dots, \boldsymbol{\eta}^*_m\}\not \subseteq \{\mathbf{y}_1, \dots, \mathbf{y}_s\}$. 
This is distinctly different than the quadrature nodes $\{\boldsymbol{\eta}_1, \dots, \boldsymbol{\eta}_m\}$ used in the standard procedure which are members of the global mesh. In the case of the alternative procedure only, the corresponding PDF values, $\widehat{p}(\boldsymbol{\eta}^*, t_{n+1})$, are determined via interpolation and/or extrapolation to avoid ill-conditioned quadrature. In the case of extrapolation, we assign the density value to be $\min_i \widehat{p}(\mathbf{y}_i,t_n)$.} Use of the alternative method varies, but it usually is only used for around 0-2\% of mesh points per time step on average in two-dimensional problems.  

For the alternative procedure, we take advantage of the structure of $G(\mathbf{x},\mathbf{y})$ to procure a weight function so that
\begin{align}
\begin{split}
\label{AltIntegral}
\widetilde{p}(\mathbf{y}_j,t_{n+1})&=
\int_{\mathbb{R}^N}r(\mathbf{y})\mathcal{N}(\mathbf{y}; \mathbf{y}_j+h\mathbf{f}(\mathbf{y}_j), h\mathbf{g}(\mathbf{y}_j)g(\mathbf{y}_j)^T)d\mathbf{y}\\ 
r(\mathbf{y}) &= \frac{G(\mathbf{y}_j,\mathbf{y})\widetilde{p}(\mathbf{y},t_n)}{\mathcal{N}(\mathbf{y}; \mathbf{y}_j+h\mathbf{f}(\mathbf{y}_j),h\mathbf{g}(\mathbf{y}_j)\mathbf{g}(\mathbf{y}_j)^T)}.
\end{split}
\end{align}
For the alternative method, the weight function $\mathcal{N}$ has a mean and variance that depends only on the current point we are updating, $\mathbf{y}_j$. We can approximate this integral using a quadrature rule like in \cref{eqn:QuadRuleEta} with quadrature nodes $\{\boldsymbol{\eta}^*_1, \dots, \boldsymbol{\eta}^*_m\}$.  

\subsection{Quadrature Weights}\label{ssec:iquad}
We now detail how the quadrature rule in \cref{eqn:QuadRuleEta} of
\begin{equation*}
 \int_{\mathbb{R}^N} r(\mathbf{y})\mathcal{N}(\mathbf{y}; \boldsymbol{\mu},\boldsymbol{\Sigma})d\mathbf{y}\approx \sum_{i=1}^m r(\boldsymbol{\eta}_i) \widehat{w}_{i}
      \end{equation*}
is generated, assuming the quadrature nodes $\{\boldsymbol{\eta}_i\}_{i=1}^m$ are known. \Cref{LejaPointsSection} later describes the selection process of $\{\boldsymbol{\eta}_i\}_{i=1}^m$ from the global mesh. 

For each point $\mathbf{y}_j$ in the global mesh, the Laplace approximation of \cref{QuadFit} allows us to write the integral for the update of $\widetilde{p}$ at $\mathbf{y}_j$ as in \cref{eqn:LaplaceFit}. Now, let $\boldsymbol{\Sigma} = \mathbf{L} \mathbf{L}^T$ be any decomposition of $\boldsymbol{\Sigma}$ (e.g., the Cholesky decomposition). Then, integral \eqref{eqn:LaplaceFit} can be rewritten as
\begin{align*}
 \int_{\mathbb{R}^N} r(\mathbf{y}) \mathcal{N}\left(\mathbf{y}; \boldsymbol{\mu}, \boldsymbol{\Sigma}\right) d {\mathbf{y}} \stackrel{\mathbf{y} = \mathbf{L} \boldsymbol{\zeta} + \boldsymbol{\mu}}{=} 
  \int_{\mathbb{R}^N} r(\mathbf{L} \boldsymbol{\zeta} +\boldsymbol{\mu}) \mathcal{N}\left(\boldsymbol{\zeta}; \boldsymbol{0}, \boldsymbol{I}\right) d {\boldsymbol{\zeta}}.
\end{align*}
Under the same map, we define quadrature nodes $\{\boldsymbol{\eta}_i\}_{i=1}^m$, which are in $\mathbf{y}$ space, as
\begin{align}\label{eq:eta-zeta-map}
  \boldsymbol{\eta}_i = \mathbf{L} \boldsymbol{\zeta}_i + \boldsymbol{\mu}
\end{align}
where $\{\boldsymbol{\zeta}_i\}_{i=1}^m$ are nodes in $\boldsymbol\zeta$ space. 
The weights $\widehat{w}_i$ of the quadrature rule in \cref{eqn:QuadRuleEta} are chosen as the interpolatory weights associated to a particular polynomial space. 
The nodes $\{\boldsymbol{\zeta}_i\}_{i=1}^m$ are chosen in a way that guarantees unisolvence of a polynomial interpolation problem, i.e., given a degree-$k$ polynomial family $\mathcal{P}_k$, we construct a unique polynomial $Q \in \mathcal{P}_k$, so that 
\begin{align*}
  r\left(\boldsymbol{L}\boldsymbol{\zeta}_i+\boldsymbol{\mu}\right) &= Q(\boldsymbol{\zeta}_i), & i =1, \dots, m.
\end{align*}
More precisely, set $m=m_k$ and let $\{\phi_i\}_{i=1}^{m_k}$ be a basis for the degree-$k$ polynomial space $\mathcal{P}_k$, so that 
\begin{align*}
  Q(\boldsymbol{\zeta}) = \sum_{i=1}^{m_k} \widehat{c}_i \phi_i(\boldsymbol{\zeta}), 
\end{align*}
where $\widehat{\boldsymbol{c}} = (\widehat{c}_1, \ldots, \widehat{c}_{m_k})^T$ solves the linear system,
\begin{equation*}
    \mathbf{V}\boldsymbol{\widehat{c}} = \mathbf{r}, \hspace{1cm} \mathbf{V}_{i,v} = \phi_v(\mathbf{\zeta}_{i}), \hspace{1cm} \mathbf{V}\in \mathbb{R}^{m_k \times m_k}
\end{equation*}
where $\boldsymbol{r} = (r(\boldsymbol{\eta}_1), \ldots, r(\boldsymbol{\eta}_{m_k}))^T$. We generate the quadrature weights from exact integration of $Q$ in place of $r$:
\begin{align}\label{eq:zeta-integral-approximation}
  \int r(\boldsymbol{L}\boldsymbol{\zeta} + \boldsymbol{\mu}) \mathcal{N}\left( \boldsymbol{\zeta}; \boldsymbol{0}, \boldsymbol{I} \right) d {\boldsymbol{\zeta}} \approx
  \int Q(\boldsymbol{\zeta}) \mathcal{N}\left( \boldsymbol{\zeta}; \boldsymbol{0}, \boldsymbol{I} \right) d {\boldsymbol{\zeta}} = \sum_{i=1}^{m_k} r\left(\boldsymbol{\eta}_i\right) \widehat{w}_i,
\end{align}
where $\boldsymbol \eta_i$ are the quadrature nodes and the quadrature weights $\widehat{w}_i$ are given by 
\begin{align}\label{eq:quad-weights}
  \boldsymbol{\widehat{w}} = \left(\widehat{w}_1, \ldots, \widehat{w}_{m_k}\right) &= \boldsymbol{\xi}^T \mathbf{V}^{-1}, & \boldsymbol\xi_i &\coloneqq \int \phi_i(\boldsymbol{\zeta}) \mathcal{N}(\boldsymbol{\zeta}; \boldsymbol{0}, \boldsymbol{I}) d {\boldsymbol{\zeta}}.
\end{align}
The expression for $\boldsymbol{\widehat{w}}$ can be somewhat simplified computationally if we choose the basis $\phi_i$ as a family of polynomials that are $L^2$-orthogonal under the weight function $\mathcal{N}$. Since $\mathcal{N}$ is a Gaussian, the appropriate orthonormal polynomial family are (normalized and tensorized) Hermite polynomials. In particular, let $\alpha^{(1)}, \ldots, \alpha^{(m_k)}$ be an(y) enumeration that satisfies $|\alpha^{(i)}| \leq |\alpha^{(i+1)}|$, then we consider the basis
\begin{align}\label{eq:phi-multid}
  \phi_i(\boldsymbol{\zeta}) = \prod_{\ell=1}^N \widehat{h}_{\alpha^{(i)}_\ell}\left(\zeta_\ell\right),
\end{align}
where $\widehat{h}_i(\cdot)$ is the degree-$i$ normalized univariate Hermite polynomial, satisfying the orthogonality condition,
\begin{align*}
  \int_\mathbb{R} \widehat{h}_i(\zeta) \widehat{h}_v(\zeta) \mathcal{N}\left(\zeta; 0, 1\right) d \zeta &= \delta_{i,v}, & \deg \widehat{h}_i &= i,
\end{align*}
and $\delta_{i,\ell}$ is the Kronecker delta. The uniqueness of each $\widehat{h}_i$ is assured if we insist that the leading coefficient is positive. Since $\mathcal{N}(\cdot; 0, 1)$ is a probability density, $\widehat{h}_0(\zeta) \equiv 1$. The chosen basis defined from \cref{eq:phi-multid} for $\mathcal{P}_k$ then satisfies the following multivariate orthogonality condition,
\begin{align}\label{eq:hermite-orthonormality}
  \int_{\mathbb{R}^N} \phi_i(\boldsymbol{\zeta}) \phi_v(\boldsymbol{\zeta}) \mathcal{N}\left(\boldsymbol{\zeta}; \boldsymbol{0}, \boldsymbol{I}\right) d \boldsymbol{\zeta} &= \delta_{i,v}, & 
  \phi_1(\boldsymbol{\zeta}) &\equiv 1,
\end{align}
so that the moments $\boldsymbol{\xi}$ in \cref{eq:quad-weights} are given by $\boldsymbol\xi_i = \delta_{i,1}$. Therefore, with this basis, \cref{eq:quad-weights} implies that the quadrature weights are simply given as the first row of $\mathbf{V}^{-1}$,
\begin{align}\label{eq:quad-w}
  \boldsymbol{\widehat{w}}^T = \left(\mathbf{V}^{-1}\right)_{1,:}.
\end{align}
In terms of the quadrature nodes $\{\boldsymbol{\eta}_i\}_{i=1}^{m_k}$, we connect $\omega_i$ from \cref{eqn:DiscreteC-K} to $\widehat{w}_i$ as
$$\omega_i = \frac{r(\boldsymbol{\eta}_i)}{G(\mathbf{y}_j, \boldsymbol{\eta}_i) \widehat{p}(\boldsymbol{\eta}_i, t_{n})}\widehat{w}_i.$$

\subsection{(Weighted) Leja Sequences}
\label{LejaPointsSection}
We now describe the selection of quadrature nodes $\{\boldsymbol{\eta}_i\}_{i=1}^{m_k}$. Fix a global mesh index $j$, and let ${\mathfrak{Z}}$ denote the set of candidate Leja points determined from the $(\mathbf{L}, \boldsymbol{\mu})$-affine map found via the Laplace approximation in \cref{QuadFit} where $\boldsymbol{\Sigma} = \mathbf{L} \mathbf{L}^T$. More specifically, the global mesh is transformed so that
\begin{equation*}
    \boldsymbol{\zeta}_i \coloneqq \mathbf{L}^{-1}\left(\mathbf{y}_i - \boldsymbol{\mu}\right), \hspace{1cm} \boldsymbol\zeta_i \in \mathfrak{Z}.
\end{equation*}
Note that in practice, we take $\mathfrak{Z}$ as a a subset of the transformed global mesh for computational efficiency.

Our goal is to identify a subset of nodes $\{\boldsymbol{\zeta}_i\}_{i=1}^{m_k}$ , from $\mathfrak{Z}$, which define $\{\boldsymbol{\eta}_i\}_{i=1}^{m_k}$ through \cref{eq:eta-zeta-map}. Recall, $\{\boldsymbol{\eta}_i\}_{i=1}^{m_k}$ are used as quadrature nodes in the approximation given by \cref{eqn:QuadRuleEta}. The nodes $\{\boldsymbol{\zeta}_i\}_{i=1}^{m_k}$ are computed as a discrete weighted Leja sequence from $\mathfrak{Z}$. 

We will formally define one-dimensional Leja sequences, first on a compact interval $[a,b]$, and then weighted on $\mathbb{R}$. Afterwards, we will detail a linear algebra procedure to define $N$-dimensional Leja sequences. 

On an interval $[a,b]$, an unweighted Leja sequence is classically defined as any sequence of points $z_\ell \in [a,b]\subset \mathbb{R}$ for $\ell=1,2,\dots $ that solves the sequential optimization problem,
\begin{equation}
    \label{MaxLeja}
     z_{J+1} = \argmax_{z \in [a,b]}\prod_{\ell=0}^J |z-z_\ell|,
\end{equation}
where $z_0$ is arbitrarily chosen in the interval $[a,b]$ \cite{LejaClassicalEdrei, Leja1957}. Leja sequences are not unique due to the choice of the initial point $z_0$ as well as the potential for multiple maximizers at each step of \cref{MaxLeja}.

In the one-dimensional setting, we are interested in identifying nodes whose corresponding interpolatory quadrature rule is an accurate approximation of the form in \cref{eq:zeta-integral-approximation}. To accomplish this, we must consider integrals over the entire real line with respect to a weight function. For now, we will use a general weight function $W$ which we will later specify as $\mathcal{N}$. 
A naive extension of the optimization in \cref{MaxLeja} that replaces $[a,b]$ by $\mathbb{R}$ is not well defined, so we adopt the strategy from \cite{Narayan2014} that uses weighted Leja sequences and shows that these sequences result in accurate interpolatory quadrature rules with respect to a weight function. 

Let $z_\ell$ for $\ell = 1, 2, \ldots$ be any sequence of solutions to a modified version of \cref{MaxLeja},
    \begin{equation}
      z_{J+1} = \argmax_{z \in \mathbb{R}}\sqrt{W(z)}\prod_{\ell=0}^J |z-z_\ell|
        \label{eqn:LPOpt},
    \end{equation}
where $z_0$ is chosen arbitrarily as an initial point. The use of the weight function penalizes the selection of points at infinity. The use of $\sqrt{W}$ is motivated by the fact that weighted Leja sequences defined by \cref{eqn:LPOpt} satisfy the asymptotic Fekete property, and asymptotically distribute like $W$-weighted Gauss quadrature nodes \cite{Narayan2014}. Empirically, these sequences also form stable quadrature rules for approximating $W$-weighted integrals.

We cannot directly use \cref{eqn:LPOpt} in the DTQ framework because we do not have the freedom to choose points arbitrarily in $\mathbb{R}$. Instead, we pose an optimization problem over the discrete candidate set $\mathfrak{Z}$. We therefore construct the following discrete, $\mathcal{N}$-weighted Leja sequence
    \begin{equation}
      \zeta_{J+1} = \argmax_{\zeta \in \mathfrak{Z}}\sqrt{\mathcal{N}(\zeta;0,1)}\prod_{\ell=0}^J |\zeta-\zeta_\ell|
        \label{eq:leja-discrete}.
    \end{equation}
Ideally, the candidate set $\mathfrak{Z}$ should form a so-called weakly admissible mesh so that the points sufficiently cover the domain of interest \cite{bos_geometric_2011,bos_weakly_2011,xu_randomized_2021}. 

The above discussion holds for one dimension, but the objective function being maximized in \cref{eq:leja-discrete} does not directly generalize to higher dimensions. We will now discuss the one-dimensional problem in a form that can be extended to the multivariate case. The calculation of weighted discrete Leja sequences in \cref{eq:leja-discrete} can be simplified. It is possible to show that \cref{eq:leja-discrete} is equivalent to constructing a Vandermonde-like matrix via a particular kind of greedy determinant maximization \cite{Bos2010} which reduces the process into a simple numerical linear algebra problem. The sequence in \cref{eq:leja-discrete} can be computed from the pivots of a row-pivoted LU factorization of a Vandermonde-like matrix. In particular, let $\widetilde{\mathbf{V}} \in \mathbb{R}^{|\mathfrak{Z}| \times m_k}$
denote a weighted Vandermonde-like matrix on the candidate points $\mathfrak{Z}$,
\begin{align*}
  \widetilde{\mathbf{V}}_{\ell,i} &= \sqrt{\mathcal{N}(\zeta_\ell;0,1)} ~\widehat{h}_i(\zeta_\ell),\hspace{1cm} \zeta_\ell \in \mathfrak{Z},
\end{align*}
where 
we choose $\{\widehat{h}_i \}_{i=1}^{m_k}$ as the univariate, $\mathcal{N}(\cdot;0,1)$-weighted orthonormal Hermite polynomials. With $ \widetilde{\mathbf{P}} \widetilde{\mathbf{V}} = \widetilde{\mathbf{L}} \widetilde{\mathbf{U}}$ the pivoted LU decomposition of $\widetilde{\mathbf{V}}$, a solution to \cref{eq:leja-discrete} 
is given by the first $m_k$ $ \widetilde{\mathbf{P}}$-permuted points
\begin{equation}\label{eq:zeta-pivots}
  \{\zeta_{\ell_1}, \dots, \zeta_{\ell_{m_k}}\} \in \mathfrak{Z}, \hspace{1cm} (\ell_1, \ldots, \ell_{|\mathfrak{Z}|})^T =  \widetilde{\mathbf{P}} \left(1, 2, \ldots, |\mathfrak{Z}| \right)^T.
\end{equation}
Due to the equivalence between the solution to \cref{eq:leja-discrete} and the linear algebra procedure, in practice we compute weighted Leja sequences via an LU factorization with partial row pivoting of a Vandermonde-like matrix \cite{Bos2010}.

\hl{This linear algebra procedure is directly generalizable to multiple dimensions. For $N > 1$, we use $N$-dimensional points $\boldsymbol{\zeta}_i$ to define}
\begin{align*}\label{eqn:NDVtilde}
  \widetilde{\mathbf{V}}_{\ell,i} &= \sqrt{\mathcal{N}(\boldsymbol{\zeta}_\ell;\boldsymbol{0},\boldsymbol{I})} ~\phi_i(\boldsymbol{\zeta}_\ell),\hspace{1cm} \boldsymbol{\zeta}_\ell \in \mathfrak{Z},
\end{align*}
\hl{where we use the multivariate, $\mathcal{N}(\cdot;\boldsymbol{0},\boldsymbol{I})$-weighted orthonormal Hermite polynomial basis $\{\phi_{i}\}_{i=1}^{m_k}$. Then, we use the same greedy determinant maximization procedure (implemented via the pivoted LU approach) used in the one-dimensional case to compute the $N$-dimensional Leja sequence. }

We emphasize that the points selected from the pivots of the LU factorization make up a weighted discrete Leja sequence and are used as quadrature nodes in the approximation \eqref{eq:zeta-integral-approximation}. 
For the overall DTQ method, the procedure above must be repeated for every point in the global mesh (i.e., for every global index $j$). Our computation of these discrete weighted Leja sequences makes use of code from the PyApprox package \cite{PyApprox}. 

\subsection{Quadrature Rule Condition
Number: Leja Point Reuse\\ and Alternative Method Use}
Although the Laplace-approximated affine map parameters $(\boldsymbol{L}, \boldsymbol{\mu})$ are recomputed at every time step, to save computational effort, we recompute Leja sequences only if a stability condition is violated. Leja points are attempted to be reused from time step to time step as long as the condition number
\begin{equation*}
\label{condNumberGamma}
    \Gamma \coloneqq \left\| \boldsymbol{\widehat{w}}\right\|_1  <1+\epsilon,
\end{equation*}
where $\|\cdot\|_1$ is the $\ell^1$ norm on vectors, $\boldsymbol{\widehat{w}}$ are the interpolatory quadrature weights defined in \cref{eq:quad-w}, and $\epsilon$ is an adjustable threshold. In practice, the number of mesh points for which we can reuse Leja sequences from the previous time step depends on the drift and diffusion. However, we find that we are often able to reuse a large portion of the Leja points from the previous time step, which increases the speed of the algorithm substantially. We will quantify Leja point reuse in \cref{sec:results} for several examples.

\hl{We also use the condition number to check if we should revert to the alternative procedure. More specifically, if our computation results in a quadrature rule with condition number such that $\Gamma > cond_{alt}$, we assume that the quadrature rule will not yield an accurate result, and revert to using the alternative procedure.}

\section{DTQ with an Adaptive Mesh}
In this section, we describe the adaptive part of the procedure, which updates the mesh based on the density as time evolves. We outline the overall adaptive DTQ method in \Cref{AdaptiveDTQ}.

\begin{algorithm}[h!]
\caption{\hl{Adaptive DTQ}}
\label{AdaptiveDTQ}
\begin{algorithmic}[1]
\Procedure{AdaptiveDTQ}{}:
\While{step forward}:
    \State add points to mesh boundary if needed (\cref{AddingBoundaryPoints})
    \State remove points from mesh boundary if needed (\cref{RemovingBoundaryPoints})
    \For{each mesh point}:
        \State attempt local Laplace approximation (\cref{QuadFit})
        \If{the Laplace approximation is successful}:
            \If{attempting to reuse Leja Points}:
            \State compute quadrature weights (\cref{ssec:iquad})
            \State{step forward using \cref{eqn:QuadRuleEta} with $r$ from \eqref{quadFitIntegral}} evaluated at Leja points
                \If{condition number is sufficient}
                \State{save updated density value}  \State{record if we should try to reuse Leja points at next time step} \State{\textbf{continue}}
                \EndIf
            \EndIf
        \EndIf
            
        \State{attempt Leja point computation (\cref{LejaPointsSection}})
        \If{computed Leja points successfully}
        \State compute quadrature weights (\cref{ssec:iquad})
            \State{step forward using \cref{eqn:QuadRuleEta} with $r$ from \eqref{quadFitIntegral}} evaluated at Leja points
            \If{condition number is sufficient}
            \State{save updated density value}
            \State{record if we should try to reuse Leja points at next time step}
        \State{\textbf{continue}}
        \EndIf
        \EndIf
        \State{use alternative method (\cref{AltMethodSection})}
        \State{save updated density value}
    \EndFor
    \EndWhile
    \EndProcedure
  \end{algorithmic}
\end{algorithm}
\noindent The adaptive procedures attempts to reduce the number of mesh points required to compute $\widehat{p}(\mathbf{x},t)$ by adaptively updating the mesh as the solution evolves in time. To do this, we form a mesh that covers most of the support/mass of the solution. \Cref{fig:AdjustingMesh} gives a visual example of how the mesh is updated to track the density.
\begin{figure}[h!]
    \centering
    \includegraphics[scale=0.3]{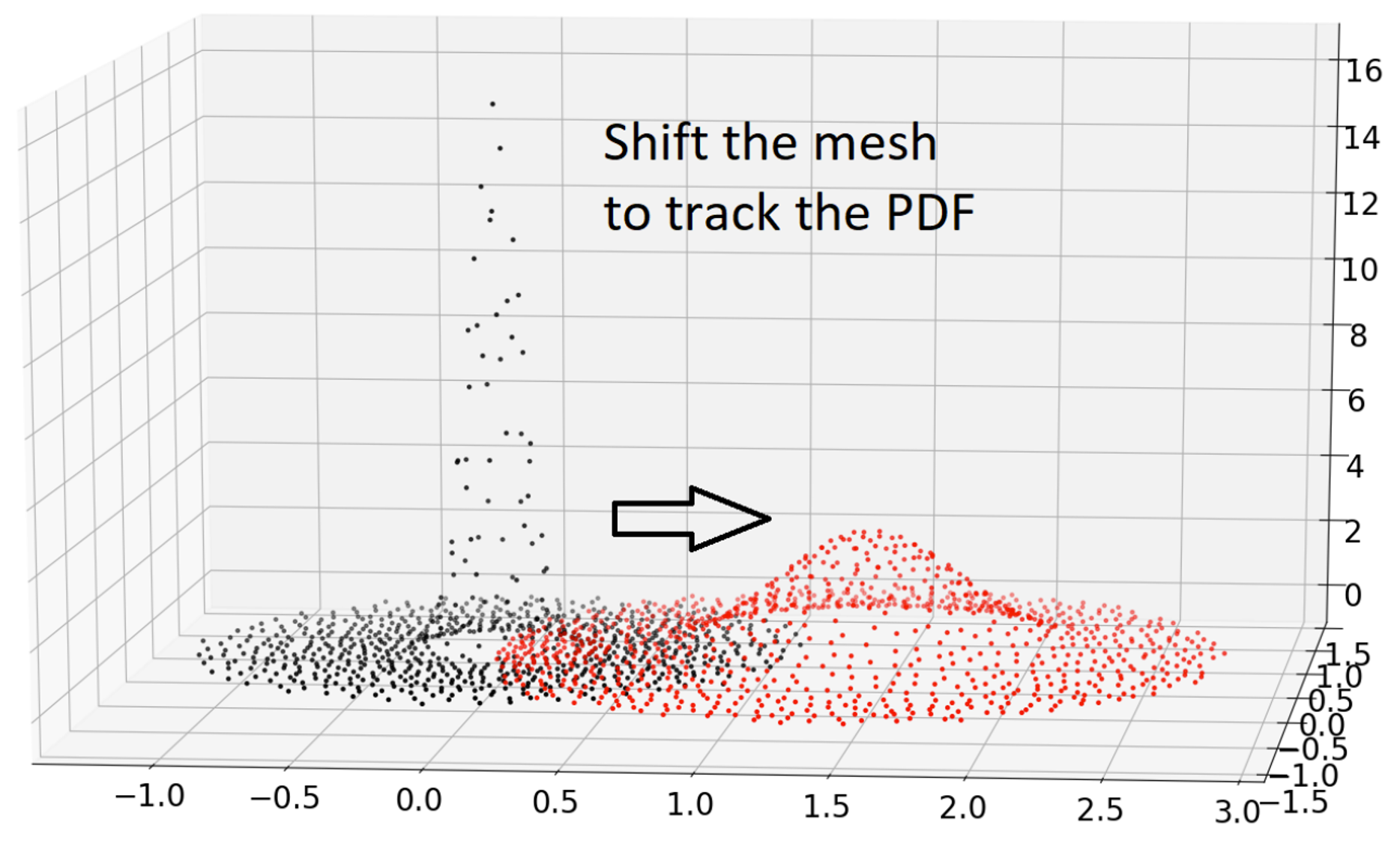}
    \caption{Shifting the mesh to keep track of the density is achieved through adding and removing mesh points.}
    \label{fig:AdjustingMesh}
\end{figure}

\subsection{Identifying the Mesh Boundary}
\label{Delaunay}
Our adaptive procedure adds points to extend the boundary of the mesh. In one-dimensional problems, identifying the boundary points simply involves finding the maximum and minimum mesh coordinate. In order to identify the boundary of the mesh in higher dimensions, we utilize a procedure that combines a mesh triangulation and \textit{alpha shape} procedure.
First, we construct a Delaunay triangulation of the global mesh $\{\mathbf{y}_i\}_{i=1}^s$, which satisfies the condition that no mesh point lies in the interior of any circumscribing sphere of any simplex in the triangulation. The Delaunay triangulation ensures that the minimum angle is maximized for all the triangles in the triangulation to avoid thin, sliver triangles \cite{delaunay1934sphere}. 

The alpha shape algorithm uses the simplices from the Delaunay triangulation to recover the boundary points of a point mesh. A pseudocode explanation of this procedure is given in Algorithm \ref{AlphaShape} and illustrated in \Cref{fig:alphaShapeFigure}. This algorithm works for $N>1$ dimensions. \Cref{fig: AlphaHull} shows a completed identification of boundary points. A survey of alpha shapes is available for more information \cite{edelsbrunner2010alpha}. For the procedure, we select $\widehat{\alpha} = \
(3/2)\Delta{max}$, where $\Delta{max}$ is the enforced maximum distance between points in the mesh.

\begin{algorithm}[h!]
\caption{\hl{Alpha Shape for Finding Boundary Points}}
\label{AlphaShape}
 \begin{algorithmic}[1]
    \Procedure{AlphaShape}{$\widehat{\alpha}$, triangulation}:
    \State compute the radius of the circumsphere for each simplex in the triangulation
    \State simplicesList = all simplices with circumsphere radius $< \widehat{\alpha}$ 
    \State initialize edgesList = []
    \For{simplex in simplicesList}:
        \State add simplex edges to edgesList
    \EndFor
    \State boundaryEdges = edges in edgesList which appear exactly once
    \State boundaryVertices = all vertices associated with edges in boundaryEdges
    \State \Return boundaryVertices
    \EndProcedure
  \end{algorithmic}
\end{algorithm}

\begin{figure}[h!]
    \centering
    \includegraphics[scale=0.35]{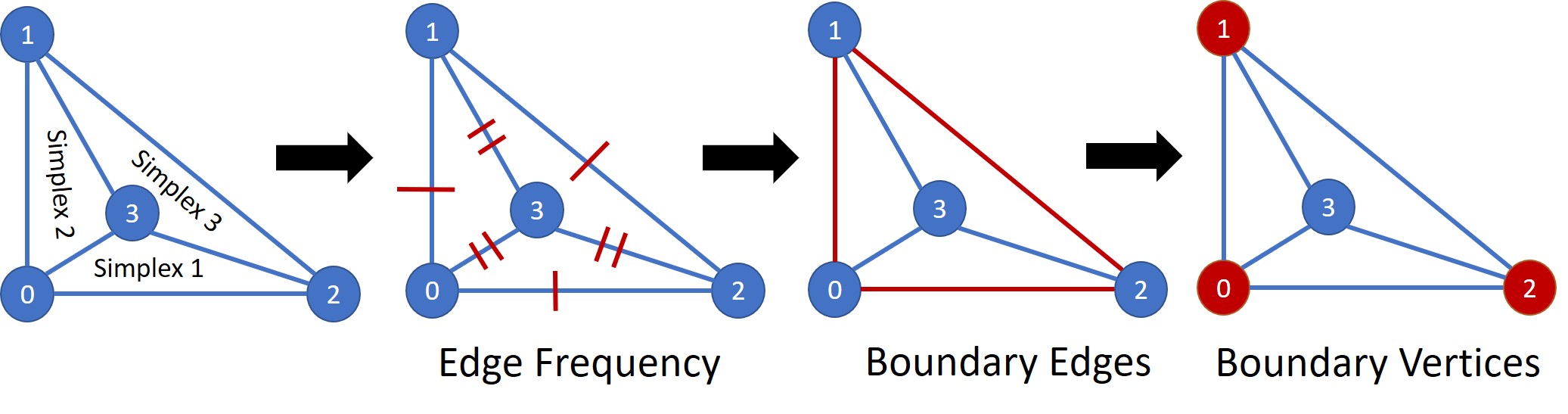}
    \caption{\hl{\textbf{Alpha shape algorithm depiction.} This outlines the alpha shape algorithm. First, identify all edges from the simplices (including duplicates). For example, the edge from point 0 to 3 has frequency two because it appears in both simplex 1 and 2. Then, all edges that appear only once are boundary edges. Finally, the vertices corresponding to the boundary edges are the boundary vertices, in this case, vertices 0, 1, and 2.}}
    \label{fig:alphaShapeFigure}
\end{figure}

\begin{figure}[h!]
    \centering
    \includegraphics[scale=0.2]{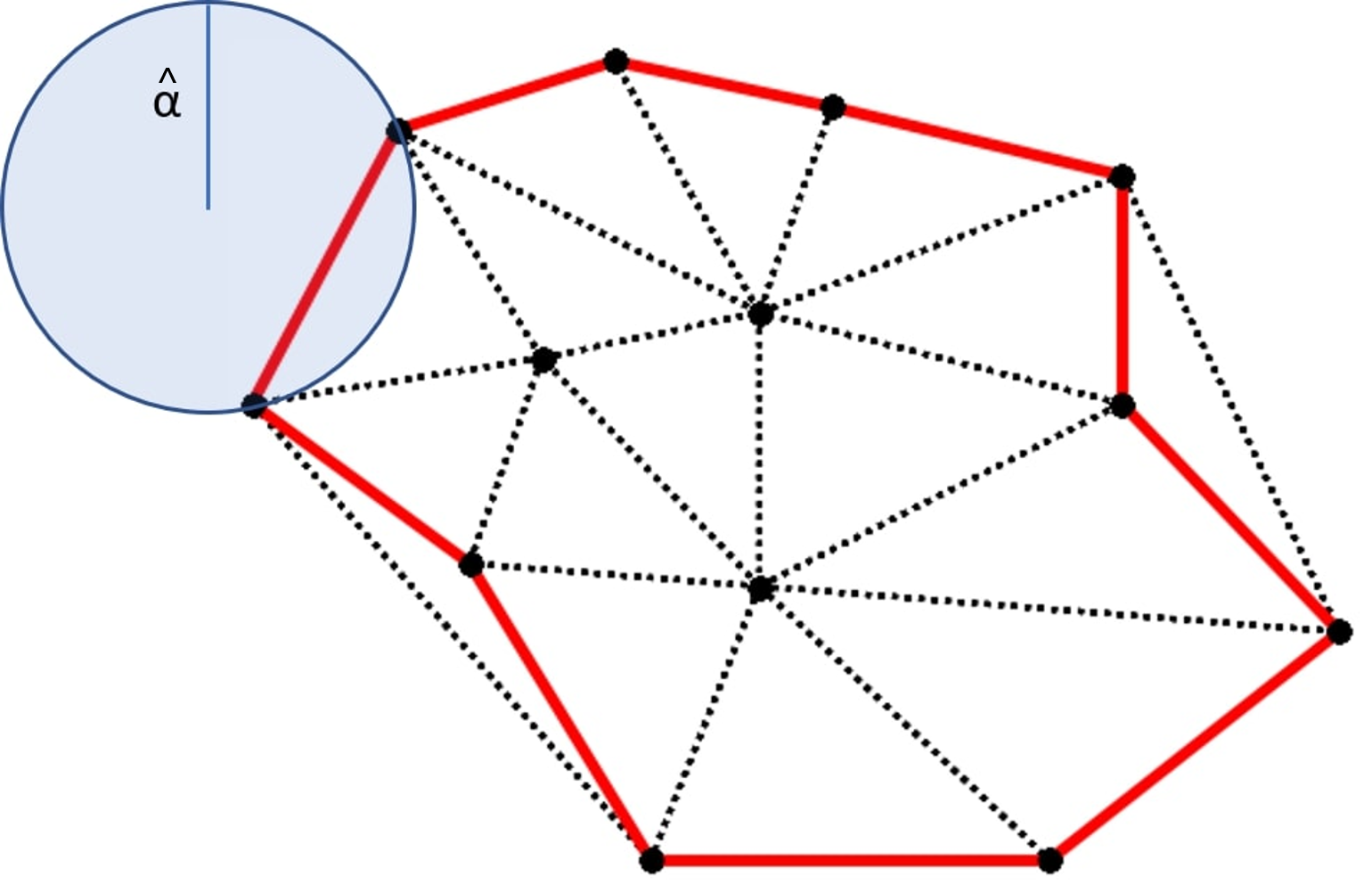}
    \caption{A depiction of a completed two-dimensional alpha shape procedure. The Delaunay triangulation is shown, and the red edges indicate the boundary that is identified.}
    \label{fig: AlphaHull}
\end{figure}

\subsection{Adding Boundary Points}
\label{AddingBoundaryPoints}
After the mesh boundary is identified, as described in the previous section,
we add mesh points based on a prescribed tolerance parameter, $\beta$, which is tuned to maintain a sufficiently small value of $\widehat{p}$ on the mesh boundary. For each boundary point where $\widehat{p}$ is larger than $10^{-\beta}$, \hl{candidate points are generated as equispaced points on an $N$-dimensional grid with spacing $\Delta{max}$}. For a candidate point to be added to the mesh, the distance to the closest point in the mesh must be greater than a set value $\Delta{min}$ and it must be less than a set value $\Delta{max}$. Essentially, $\Delta{max}$ and $\Delta{min}$ are the enforced minimum and maximum distances, respectively, between mesh points. \hl{New mesh coordinates added at time $t_{n+1}$ are assigned a density value of $\min_i \widehat{p}(\mathbf{y}_i, t_n)$.
During the simulation, we begin adding points at time step number $step_{AC}\in \mathbb{Z^+}$, and from time step $step_{AC}$ we run the procedure for adding points every $step_{A}\in \mathbb{Z^+}$ time steps. For example, if $step_{AC} = 1$ and $step_{A}=3$ we would start adding points every three time steps starting at the first time step. We typically take $step_{A}=1$ so that we check to add points every time step after $step_{AC}$.}

\subsection{Removing Points}
\label{RemovingBoundaryPoints}
We remove the mesh points that are deemed unnecessary in terms of the value of the density $\widehat{p}$. Points are removed based on the prescribed tolerance, $\beta$ from \cref{AddingBoundaryPoints}, which is used to maintain a sufficiently small value of $\widehat{p}$ at the boundary. More specifically, all mesh points (anywhere in the mesh, not just at the boundary) that are smaller than $10^{-\beta-0.5}$ are removed from the mesh. This procedure for adding points is typically run periodically; however, not at every time step.

\hl{We first run this removal procedure on time step $step_{RC}\in \mathbb{Z^+}$, and subsequently run the procedure for removing points every $step_{R}\in \mathbb{Z^+}$ time steps. We typically take $step_{RC}, step_{R} \approx 10$ so that we do not start removing points right away nor at every time step.}

\section{Results}
\label{sec:results}
The code for the adaptive DTQ procedure is located on GitHub \cite{DTQ-Github}. In this section, we demonstrate \Cref{AdaptiveDTQ} through several examples. \hl{Some examples use spatially-dependent drift and diffusion functions. Problems from one to five dimensions are included.} \Cref{tab:DTQParameterDescrptions} \hl{outlines parameters used for the Adaptive DTQ along with a brief description. }

\begin{table}[h!]
\caption{\label{tab:DTQParameterDescrptions} \hl{\textbf{DTQ parameter symbols and descriptions}}.}
\centering
\begin{tabular}{c| p{11cm}} 
 Parameter & Description \\ [0.5ex]
 \hline
 $\epsilon$ & $1+\epsilon$ is the condition number threshold for Leja point reuse  \\
 $cond_{alt}$ & The condition number threshold for using the Alternative method  \\
 ${LP}_Q$ & Number of Leja points used for the quadrature rule \\ 
 size of $\mathfrak{N}$ & The size of the set used for the Laplace approximation\\ 
 size of ${\mathfrak{Z}}$ & The number of candidate Leja points\\
$step_{AC}$ & The first step number where adding points is considered\\ 
$step_{RC}$ & The first step number where removing points is considered\\ 
$step_A$ & Determines how often points are added \\ 
$step_{R}$ & Determines how often points are removed \\ 
 $\Delta{min}$ & Minimum distance allowed between points  \\
 $\Delta{max}$ & Maximum distance allowed between points \\
 $\mathcal{R}$ & Used to define the radius of points in the initial mesh \\
  $h$ & The temporal time step size \\
$\beta$ & Used to define values for adding and removing points  \\
 \hline
\end{tabular}
\end{table}

\subsection{Commonalities Among Examples}
We demonstrate our adaptive DTQ method in this section through a variety of examples. Some examples use spatially-dependent drift and diffusion functions. Problems from one to five dimensions are included. We first describe some experimental setup characteristics that are common among all examples.

\subsubsection{Parameter Values}
\Cref{tab:DTQParameterValues}\hl{ outlines some parameters which are consistent among all examples in this section. We note that these values are adjustable and all parameter values given are likely not optimal, but they work well for the examples we show. The size of $\mathfrak{N}$ depends on if Leja quadrature nodes or nearest neighbor points are used} (see \cref{QuadFit}).
\begin{table}
\caption{\label{tab:DTQParameterValues} \hl{\textbf{DTQ parameter values by dimension}. The values of the parameters used in the examples in this manuscript. The two numbers given for the size of $\mathfrak{N}$ depend on if Leja points or nearest neighbors are used} (see \cref{QuadFit}).}
\centering
\begin{tabular}{c|c|c|c|c|c} 
 Parameter & $N=1$ & $N=2$ & $N=3$ & $N=4$ & $N=5$ \\ [0.5ex]
 \hline
 $\epsilon$ & 0.1 & 0.1 & 0.1 &0.1 & 0.1 \\ [0.5ex]
 $cond_{alt}$ & 5 & 5 & 5 &5 & 5 \\ [0.5ex]
   ${LP}_Q$ & 6 & 10 & 15 &15 & 40 \\ [0.5ex]
  size of $\mathfrak{N}$ &  ${LP}_Q$ or 20 &  ${LP}_Q$ or 20 &  ${LP}_Q$ or 150 &  ${LP}_Q$ or 200 &  ${LP}_Q$ or 300 \\ [0.5ex]
 size of ${\mathfrak{Z}}$ & 50 & 150 & 150 & 250 & 450 \\[0.5ex]
  $step_{AC}$ & 1 &1& 1 &1 & 1 \\[0.5ex]
  $step_{RC}$ & 10 & 10 & 10 &10 & 10 \\[0.5ex]
  $step_A$ & 1 & 1 & 1 &1 & 1 \\[0.5ex]
  $step_R$ & 10 & 10 & 10 & 10 & 10 \\[0.5ex]
   \hline
\end{tabular}
\end{table}
\hl{Additional parameter values for 
$\Delta{min}, \Delta{max}, \mathcal{R}, h,$ and $\beta$ are given in each individual example.}

\subsubsection{Initial Condition and Initial Mesh}
\label{sec:IC}
The initial condition in these examples is taken to be a Dirac mass centered at the origin, $\mathbf{0}$. We compute the first time step as $\widehat{p}(\mathbf{y}_j, t_h)=G(\mathbf{y}_j, \mathbf{0})$ where $G$ is defined in \cref{eqn:Gxy}.

\hl{The initial mesh is an equispaced grid of points which is centered at the origin with mesh spacing $\Delta{min}$ and with a radius within the range $[\mathcal{R}-\ \Delta{min}, \mathcal{R}]$, where $\mathcal{R}$ is a parameter defining the initial mesh radius. This range occurs due to the fact that the mesh spacing may not evenly divide into the radius.} \Cref{fig:initialMesh} \hl{shows initial mesh examples with $\mathcal{R}=2$ in 1, 2, and 3 dimensions.}

\begin{figure}[h!]
    \centering
    \includegraphics[scale=0.5]{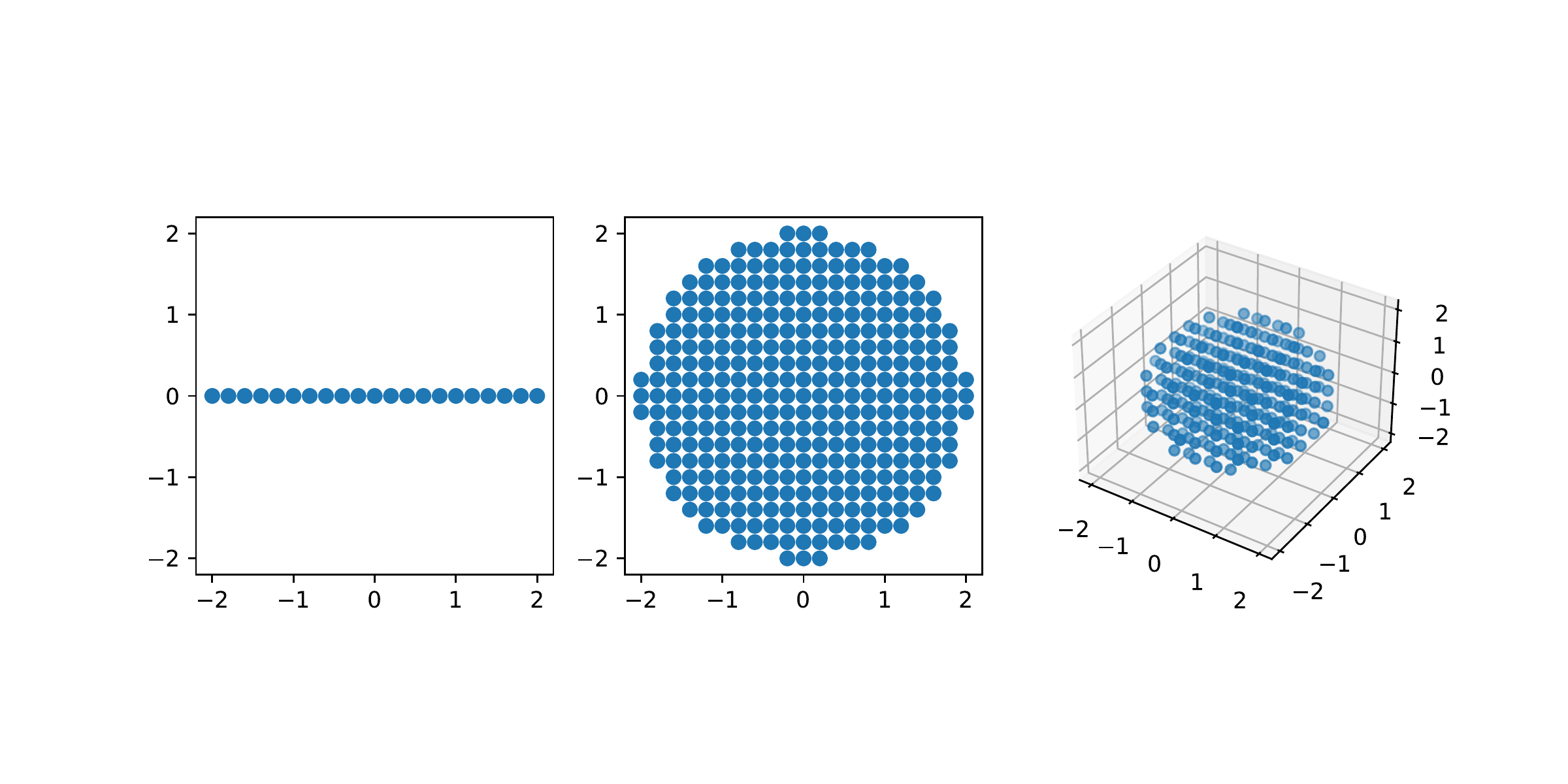}
    \caption{\textbf{\hl{Initial mesh}}. Initial mesh examples with $\mathcal{R}=2$ in $N=1$, $N=2$, and $N=3$ dimensions respectively.}
    \label{fig:initialMesh}
\end{figure}

We report errors in the experiments below, which are measured in the following ways. 
Let $p$ be the exact solution and $\widehat{p}$ be the computed (adaptive DTQ) solution. With $s$ the number of points in the mesh, define the following spatial errors at a fixed time:
\begin{align*}
  L_{2p} &= \sqrt{\frac{1}{\sum_{i=1}^s p(\mathbf{y}_i)}\sum_{i=1}^s \bigg((p(\mathbf{y}_i)-\widehat{p}(\mathbf{y}_i))^2p(\mathbf{y}_i)\bigg)} \\ 
  L_{2} &= \sqrt{\frac{1}{s}\sum_{i=1}^s (p(\mathbf{y}_i)-\widehat{p}(\mathbf{y}_i))^2} \\
  L_{1} &= \frac{1}{s}\sum_{i=1}^s |p(\mathbf{y}_i)-\widehat{p}(\mathbf{y}_i)| \\
  L_{\infty} &= \max_{i \in [s]} |p(\mathbf{y}_i)-\widehat{p}(\mathbf{y}_i)|,
\end{align*}
which are, respectively, approximations of the $L^2_p(\R^N)$, $L^2(\R^N)$, $L^1(\R^N)$, and $L^\infty(\R^N)$ norms.

\subsubsection{Error, Leja Point Reuse, and Alternative Procedure Use Metrics}\label{sec:metrics}
\hl{Furthermore, we report statistics on the average Leja point reuse and the alternative method use. Let $N_{steps}$ be the number of time steps taken in the simulation, then}
\begin{equation*}
\small
\text{Average Leja Reuse \%} = \frac{100}{N_{steps}-2} \sum_{i=3}^{N_{steps}} \frac{\text{\# of points reusing Leja points at time } t_i}{\text{\# of points in the mesh at time } t_i}.
\end{equation*}
\hl{We start at the third time step because Leja points are first computed in the second time step since the first time step is computed directly as discussed in} \cref{sec:IC}). Similarly, 
\begin{equation*}
\small
\text{Average Alt. Method Use \%} = \frac{100}{N_{steps}-1} \sum_{i=2}^{N_{steps}} \frac{\text{\# of points using alt. method at time } t_i}{\text{\# of points in the mesh at time } t_i}.
\end{equation*}
\hl{Again, we do not include the first time step because it is computed directly. }

\subsection{One-Dimensional Example} \label{oneDExample}
\hl{For this example, we consider the one-dimensional SDE with} \begin{align*}
  \mathbf{f}(x) &= 2, & 
    \mathbf{g}(x) &=1.
\end{align*}
\hl{We set simulation parameters as $(\Delta{min}, \Delta{max}, \mathcal{R}, h, \beta) = (0.4, 0.4, 2, 0.05, 4)$. The average Leja reuse was 94\% and the average alternative method use was 1.8\%. The $L_{2p}$ error at $t=10$ is 3.5e-05. The PDF at several times is shown in} \Cref{fig:1Devo}.

\begin{figure}[h!]
    \centering
    \includegraphics[scale=0.4]{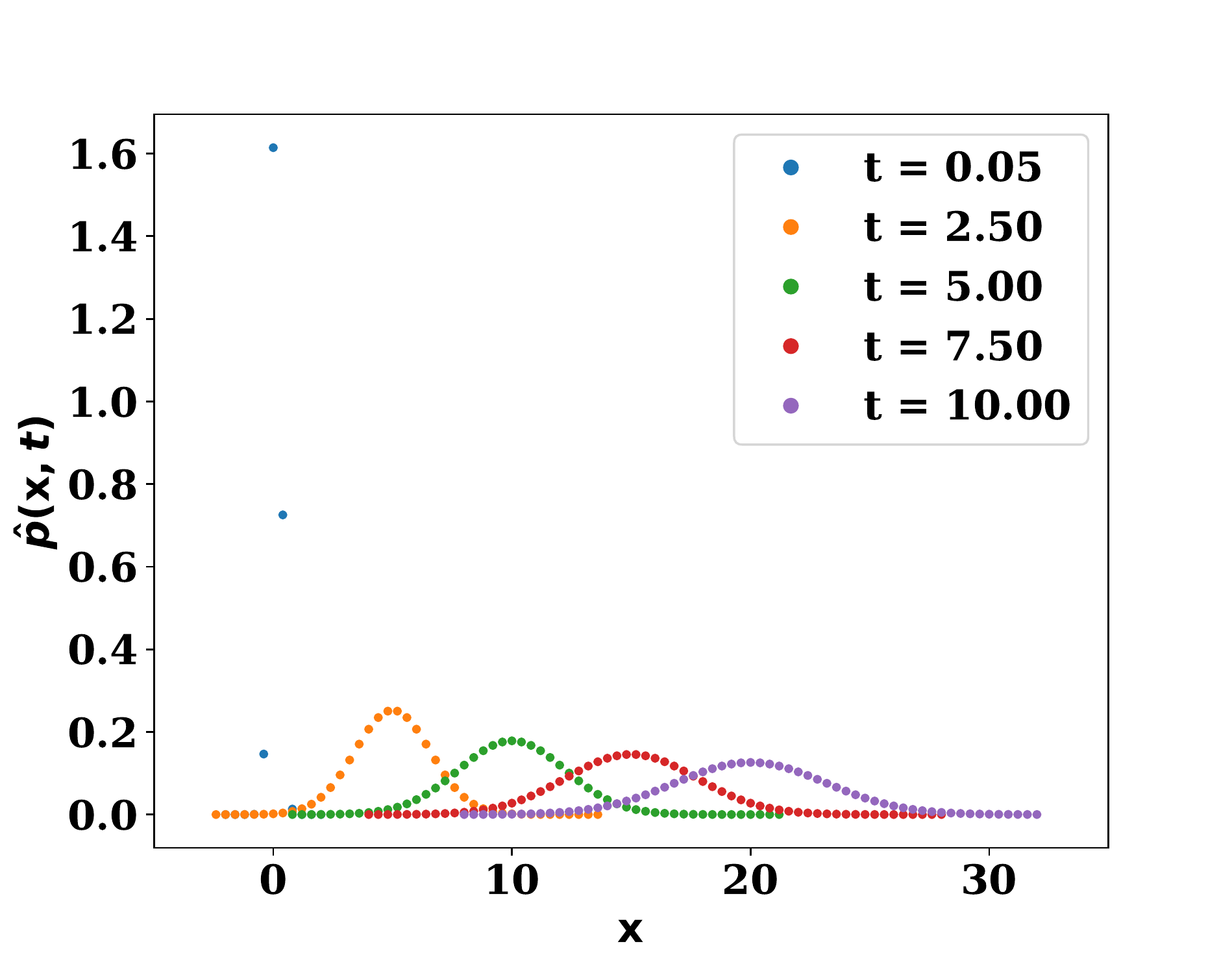}
    \caption{\textbf{\hl{One-dimensional constant drift and diffusion example}}. This figure shows the computed density at several different times indicated in the legend.}
    \label{fig:1Devo}
\end{figure}

\subsection{Two-Dimensional Examples}
Now, we will cover several examples using our adaptive DTQ procedure. 
\subsubsection{Constant Drift and Diffusion}\label{ssec:moving-hill}
Consider the solution to the SDE in \cref{eqn:SDE} with constant drift and diffusion
\begin{align}
\label{eqn:ConstantDriftDiff}
  \mathbf{f}(\mathbf{x}) &= \left(\begin{array}{c} C_1 \\ 0 \end{array}\right), & 
    \mathbf{g}(\mathbf{x}) &= \left(\begin{array}{cc} C_2 & 0 \\ 0 & C_2 \end{array}\right).
\end{align}

\noindent This SDE corresponds to the Fokker-Planck PDE
$$\frac{\partial p(\mathbf{x},t)}{\partial t}=-C_1\frac{\partial}{\partial x}p(\mathbf{x},t)+\frac{C_2^2}{2}\frac{\partial^2}{\partial (x^{(1)})^2}p(\mathbf{x},t)+
\frac{C_2^2}{2}\frac{\partial ^2}{\partial (x^{(2)})^2}p(\mathbf{x},t)$$ 
which has the exact solution  
\begin{equation}
    p(\mathbf{x},t)=\frac{1}{4\pi (C_2^2/2)t}\exp\left(\frac{-((x^{(1)}-C_1t)^2+(x^{(2)})^2)}{4(C_2^2/2)t}\right).
    \label{eqn:ExactConstantSoln}
\end{equation}

Here, we select $C_1=C_2=1$, and we define $(\Delta{min}, \Delta{max}, \mathcal{R}, h) = (0.2, 0.2, 2, 0.01)$. We explore the effect of $\beta$, the tolerance parameter for allowable density values on boundary nodes, from \cref{AddingBoundaryPoints}. The error at time $t = 1.15$ for varying $\beta$ are shown in \Cref{tab:moving-hill}.\\

\begin{table}[h!]
\caption{\textbf{Adaptive DTQ errors for different values of the boundary tolerance parameter $\beta$ at time $t=1.15$ for the moving hill example of \cref{ssec:moving-hill}}. Also shown are the number of points $s$ in the adaptive mesh at $t = 1.15$. Generally, as $\beta$ increases, the error decreases because we use more points to cover a larger area of the domain.}\label{tab:moving-hill}
    \centering
    \begin{threeparttable}
        \begin{tabular}{r c c c c r}
            \toprule
            \midrule
            \multicolumn{1}{c}{$\beta$} & \mc{$L_{2p}$ Error}& \mc{$L_{2}$ Error}& \mc{$L_1$ Error}& \mc{$L_{\infty}$ Error} & \mc{\# Points $s$}\\
            \midrule
1&1.7e-02&1.6e-02&1.2e-03&3.5e-02&181\\ \hline 2&1.8e-03&1.6e-03&7.5e-05&2.7e-03&556\\ \hline 3&2.1e-04&2.2e-04&5.1e-06&3.3e-04&994\\ \hline 4&1.8e-05&1.9e-05&3.0e-07&3.0e-05&1436\\ \hline 5&1.4e-06&2.1e-06&1.7e-08&3.5e-06&1866\\ \hline 6&8.5e-08&2.1e-07&7.6e-10&4.3e-07&2319\\ \hline 7&8.5e-09&2.5e-08&6.2e-11&4.8e-08&2780\\ \hline 8&5.6e-10&2.6e-09&3.2e-12&8.0e-09&3295\\ \hline 9&2.3e-11&1.8e-10&8.0e-14&5.3e-10&3588\\ \hline 10&1.6e-12&1.5e-11&6.7e-15&4.7e-11&3996\\ \hline 
         \hline 
        \end{tabular}
    \end{threeparttable}
\end{table}

We observe that $\beta$ has a somewhat direct control on error of the approach for this simple example.
For larger $\beta$, more points are used to form a mesh on a larger region, which improves the overall accuracy of the method. \Cref{tab:moving-hill} shows that as $\beta$ increases, the error tends to decrease. Thus, the adaptive DTQ method can be quite accurate if $\beta$ is chosen appropriately. With that said, the selection of $\beta$ must be balanced with the computational cost associated with a larger mesh. The number of mesh points $s$ at the last time step is also shown.

\subsubsection{Adaptive Leja Quadrature vs. Equispaced Trapezoidal DTQ}
\hl{We will now compare and contrast our adaptive DTQ method with the trapezoidal rule DTQ method. For notational purposes, we will refer to the two differing DTQ methods as follows:}
\begin{itemize}
    \item DTQ$_{TR}$: non-adaptive density tracking by quadrature method using an equispaced mesh and a trapezoidal quadrature rule (\cref{ssec:tensor-DTQ}).
    \item DTQ$_{LQ}:$ adaptive density tracking by quadrature method using a interpolatory Leja quadrature rule with the ability to add and remove mesh points.
\end{itemize}

\hl{In many lower-dimensional problems (e.g. $N=1, 2$) over a small domain, DTQ$_{TR}$ will be computationally faster than DTQ$_{LQ}$. With that said, DTQ$_{TR}$ will typically require more memory due to the need to form a dense grid. As the domain grows, even in smaller dimensions, DTQ$_{LQ}$ becomes more efficient than DTQ$_{TR}$, eventually becoming the preferred method. In this section, we formulate a two-dimensional example where DTQ$_{LQ}$ is both computationally faster to run and uses fewer mesh points than DTQ$_{TR}$. As the dimension of the problem increases, we expect that DTQ$_{LQ}$ will typically be the superior option of the two for many problems because DTQ$_{TR}$ will use significantly more points or simply be too memory intensive.}

We compared the computational timing for DTQ$_{TR}$ against DTQ$_{LQ}$ for the constant drift and diffusion example in \cref{eqn:ConstantDriftDiff} with $C_1=1$ and $C_2=0.6$. The simulation parameters for DTQ$_{LQ}$ are $(\Delta{min}, \Delta{max}, \mathcal{R}, h) = (0.38, 0.38, 2, 0.05)$ with  $\beta \in [2.5, 3, 4, 5, 6]$. For DTQ$_{TR}$, we use $h=0.05$ and equispaced spatial step sizes of $\kappa \in [0.25, 0.2, 0.18, 0.15]$. Notice that we varied the accuracy and cost for each of these two methods by changing a parameter. For DTQ$_{LQ}$, we adjusted the boundary cutoff parameter, $\beta$, and for DTQ$_{TR}$ we adjusted the equispaced spatial step size ${\kappa}$.
 
\hl{Since we are considering a tensorized trapezoidal quadrature rule procedure that does not adaptively update the mesh, we need to determine the proper mesh for DTQ$_{TR}$ \emph{a priori} using the mesh from DTQ$_{LQ}$ as a starting point. In this example, we used information about the meshes from the DTQ$_{LQ}$ result with $\beta = 4$ and added a 0\% and 50\% buffer. The buffer percentage is used to create a tensorized mesh with padding. Psuedocode for how this works is given in} \Cref{algo:buffer} and a depiction of this is shown in \Cref{Fig:PaddingBufferMesh}.

\begin{algorithm}[h!]
\caption{\hl{Form tensorized mesh based on DTQ$_{LQ}$ mesh}}
\label{algo:buffer}
 \begin{algorithmic}[1]
    \Procedure{TensorizedMeshBasedOnDTQ$_{LQ}$Mesh}{$DTQ_{LQ}$ mesh, buffer}:
    \State [$min_1, \dots, min_N$] = min($DTQ_{LQ}$ mesh) in each dimension
    \State [$max_1, \dots, max_N$] = max($DTQ_{LQ}$ mesh) in each dimension
    \State [$pad_1, \dots, pad_N$] = buffer/2*[$max_1 - min_1, \dots, max_N-min_N$]
    \State grid = [$min_1 - pad_1, \dots, min_N-pad_N$] $\times$ [$max_1+pad_1, \dots, max_N+pad_N$]
    \State \Return grid
    \EndProcedure
  \end{algorithmic}
\end{algorithm}

\begin{figure}[h!]
    \centering
    \includegraphics[scale=0.35]{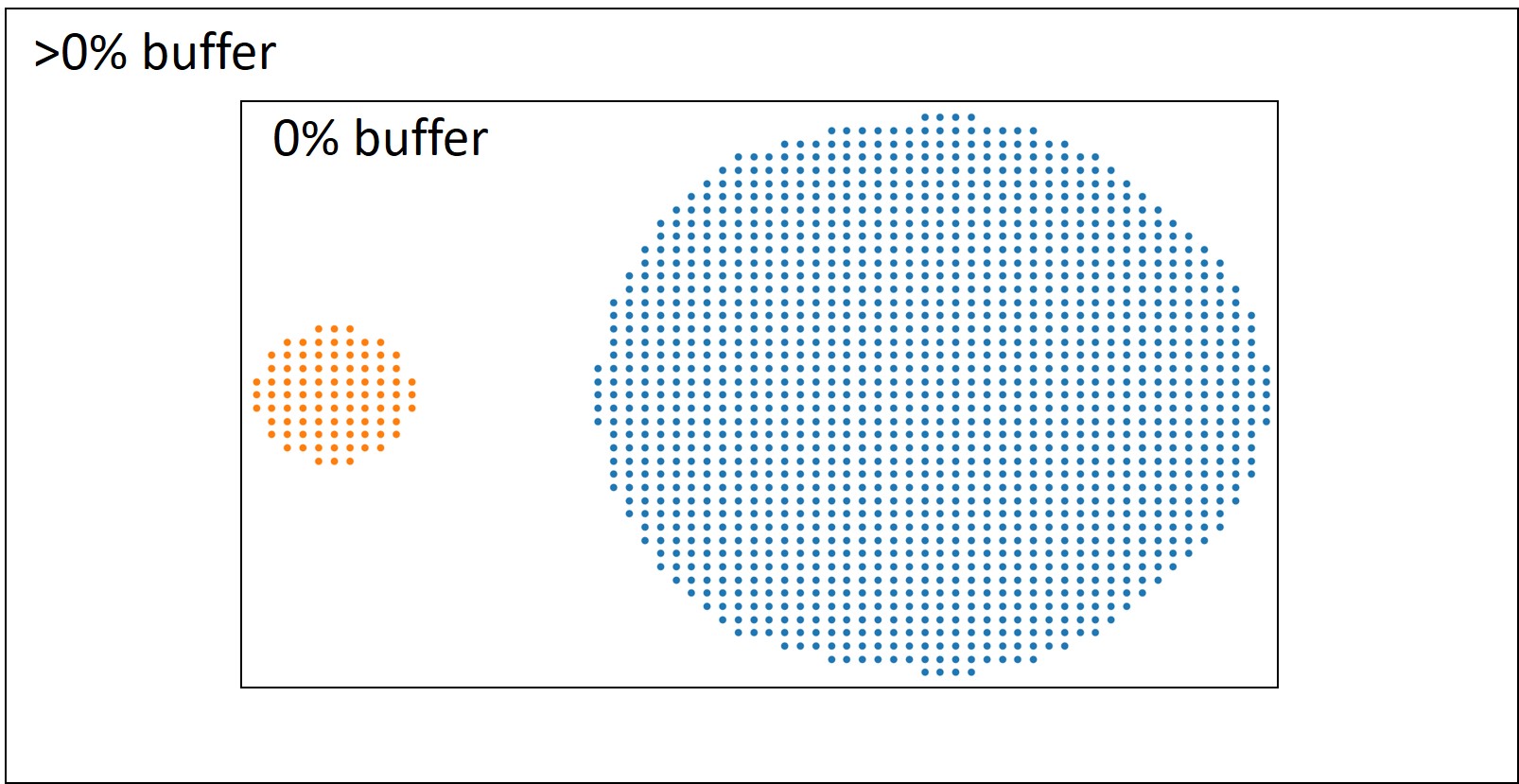}
    \caption{\textbf{\hl{Two-dimensional example of forming a tensorized mesh based on an adaptive mesh with a buffer}. This figure illustrates how we form a tensorized mesh for DTQ$_{TR}$ based on the DTQ$_{LQ}$ mesh with an additional buffer. The orange and blue meshes depict the coverage during a DTQ$_{LQ}$ simulation at the starting and ending time respectively. The rectangular lines indicate the region the equispaced tensorized mesh will fill in given a buffer percentage of $0$ or $>0$. Notice, the padding in each dimension can be different.}}
    \label{Fig:PaddingBufferMesh}
\end{figure}

\hl{Essentially, we form a tensorized mesh for DTQ$_{TR}$ based on the DTQ$_{LQ}$ mesh with an additional buffer which depends on the coverage of the domain during the DTQ$_{LQ}$ simulation. We refer to the $0\%$ buffer as an oracle solution because the knowledge needed to set a mesh which tightly supports the solution in this way requires complete knowledge of the solution support. The $50\%$ buffer still requires knowledge of the solution; however it represents a much looser estimate than the oracle of the required mesh and is meant to be more indicative of an educated guess for the mesh.}

\Cref{fig:T40Time} shows the timings for each simulation recorded relative to the timing of the DTQ$_{LQ}$ simulation with $\beta=2.5$, indicated by the star. The exact $L_{2p}$ errors are compared since the exact solution is known (see \cref{eqn:ExactConstantSoln}). \Cref{fig:T40Time} shows that, for similar effort (relative time), DTQ$_{TR}$ takes about two (0\% buffer) to ten (50\% buffer) times longer than DTQ$_{LQ}$. Additionally, we emphasize again that DTQ$_{TR}$ requires information gathered from the DTQ$_{LQ}$ mesh in order to run the simulation.

\hl{Recall, the enforced mesh spacing for DTQ$_{LQ}$ was kept at 0.38 for all simulations since the $\beta$ parameter was adjusted for accuracy. This mesh spacing was much larger than the mesh spacing of 0.2 and 0.15 used by the accurate DTQ$_{TR}$ simulations. The number of points for each simulation are shown in} \Cref{fig:T40Points}. \hl{We see that DTQ$_{TR}$ required one to two orders of magnitude more points to achieve similar error to DTQ$_{LQ}$. Also, we note that the recorded points shown in} \Cref{fig:T40Points} \hl{are at or near a maximum for the DTQ$_{LQ}$ procedure since they are reported at the final time step. }

An AMD EPYC 7702P 64-Core Processor with 251 GiB of memory was used for this example.

\begin{figure}[h!]
    \centering
    \includegraphics[scale=0.38]{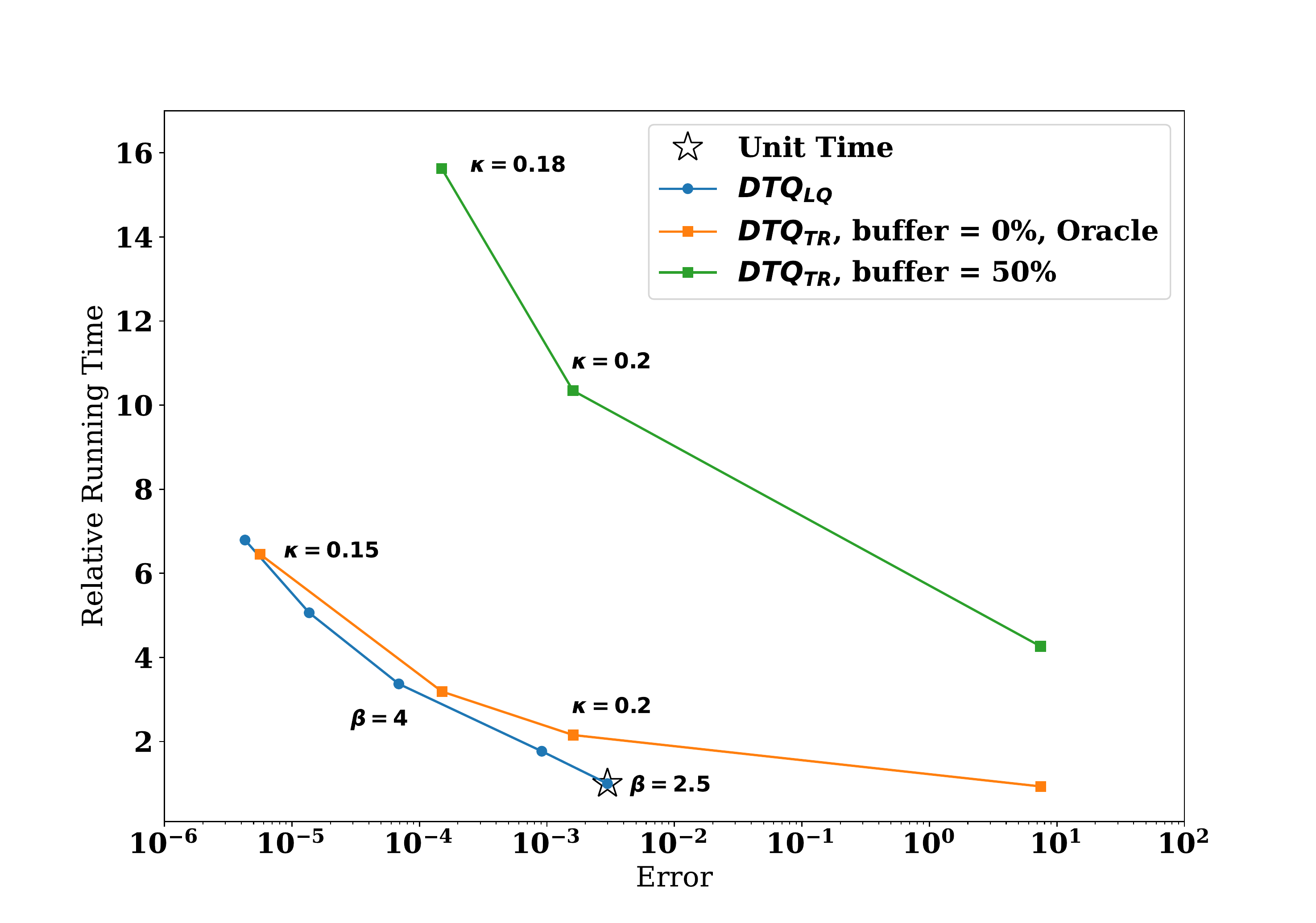}
    \caption{\textbf{DTQ$_{TR}$ and DTQ$_{LQ}$ timing comparison}. In this figure, we compare the error ($L_{2p}$) and timing between DTQ$_{TR}$ and DTQ$_{LQ}$ for a constant drift, constant diffusion example where the end time is $t=40$.}
    \label{fig:T40Time}
\end{figure}

\begin{figure}[h!]
    \centering
    \includegraphics[scale=0.38]{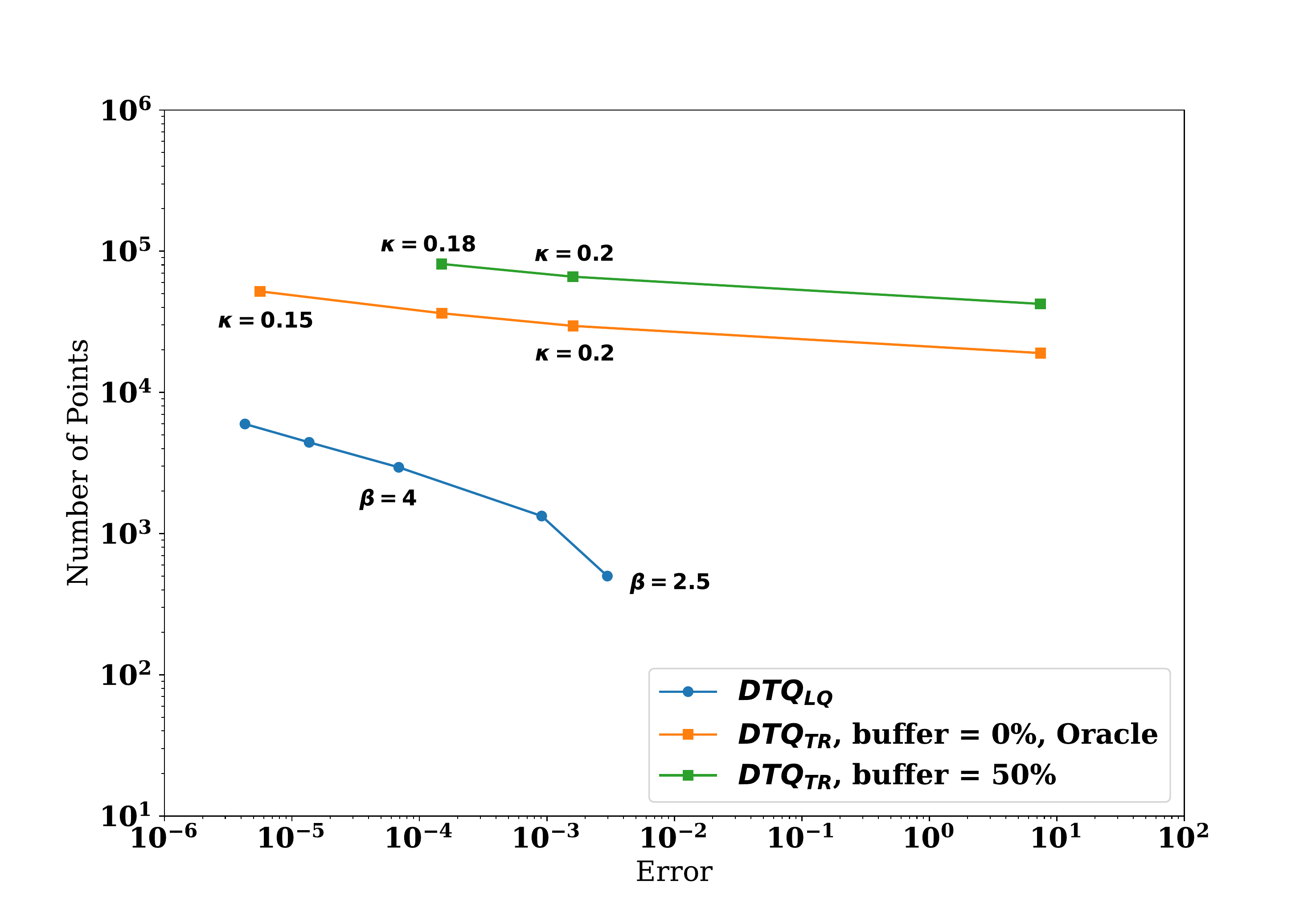}
    \caption{\hl{\textbf{DTQ$_{TR}$ and DTQ$_{LQ}$ end time mesh size comparison}. In this figure, we compare the number of points at the end time $t=40$ between DTQ$_{TR}$ and DTQ$_{LQ}$ for a constant drift, constant diffusion example. The error is given in terms of $L_{2p}$. The trapezoidal rule uses the same number of points per time step whereas the adaptive procedure updates the mesh points to track the density.}}
    \label{fig:T40Points}
\end{figure}

\subsubsection{Erf Drift Example}\label{ssec:four-hills}
We consider the solution to the two-dimensional SDE in \cref{eqn:SDE} with drift and constant diffusion
\begin{align*}
  \mathbf{f}(\mathbf{x}) &= \left(\begin{array}{c} 2\text{erf}(10x^{(1)}) \\ 2\text{erf}(10x^{(2)}) \end{array}\right), & 
    \mathbf{g}(\mathbf{x}) &= \left(\begin{array}{cc} 0.75 & 0 \\ 0 & 0.75 \end{array}\right).
\end{align*}
The error function used for the drift is defined as 
\begin{equation*}
    \text{erf}(z) = \frac{2}{\sqrt{\pi}}\int_0^z e^{-t^2}dt.
\end{equation*}
In this example involving the erf drift, we use DTQ$_{LQ}$ simulation parameters defined as\\ $(\Delta{min}, \Delta{max}, \mathcal{R}, h, \beta) = (0.25, 0.3, 3, 0.04, 4)$. We also use a finely discretized DTQ$_{TR}$ simulation with parameters of ($\kappa, h) = (0.08, 0.01)$ for comparison. We would have preferred to use $\kappa = 0.05$ or smaller for accuracy purposes; however, running such an experiment was not feasible on available hardware due to memory limitations.

\begin{figure}[h!]
    \centering
    \includegraphics[scale=0.55]{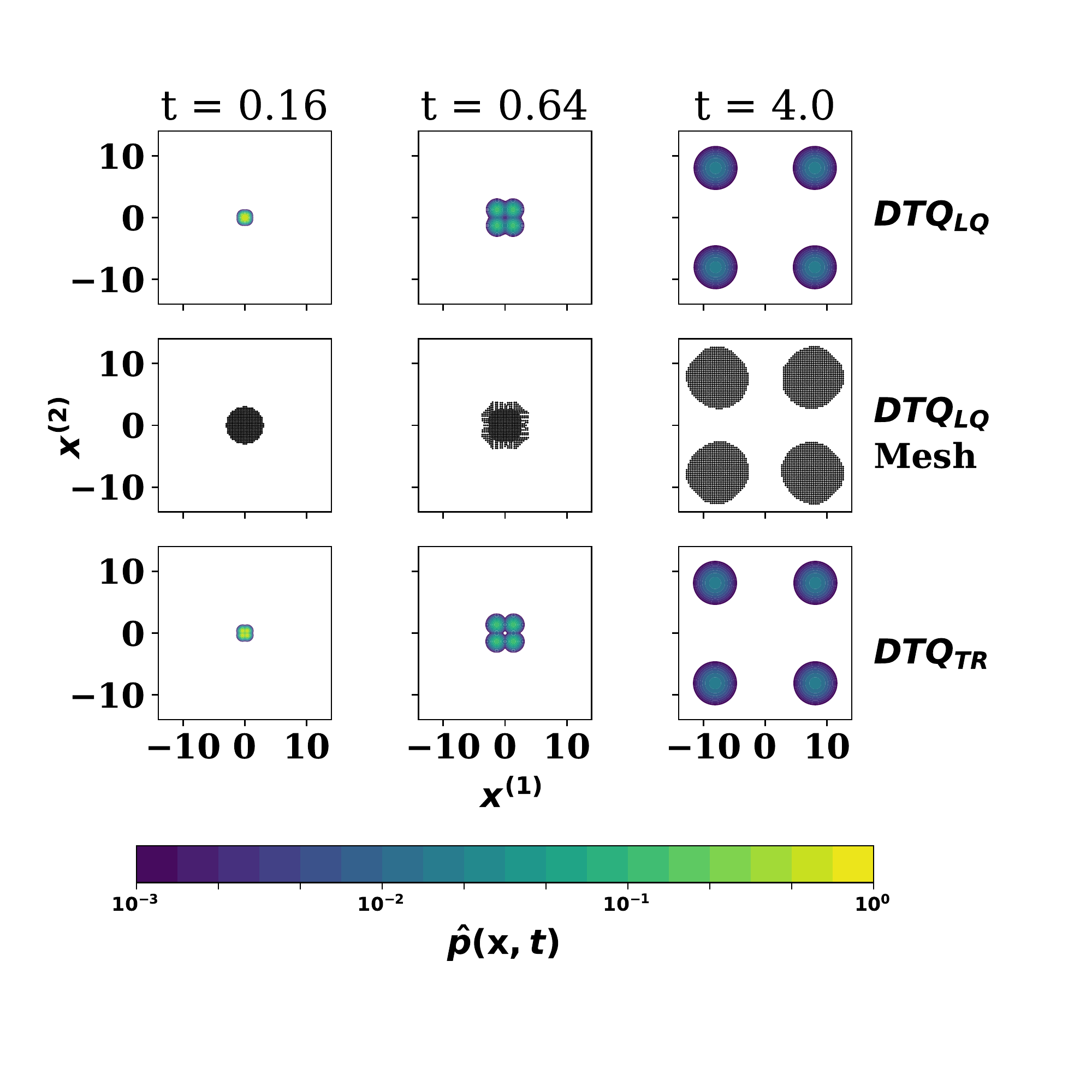}
    \caption{\textbf{Erf drift example.} The solution at the indicated times for the erf drift example of \cref{ssec:four-hills}. Top row: Density values of DTQ$_{LQ}$. Middle row: DTQ$_{LQ}$ mesh points. Bottom row: Density values of a finely discretized DTQ$_{TR}$.
    Density values below $10^{-3}$ are shown as white for visualization purposes.}
    \label{fig:four hill}
\end{figure}


The simulation features a single mass breaking into four distinct hills and expanding outwards. The computed solutions of $\widehat{p}$ at various times are shown in \Cref{fig:four hill}. The comparison shows that DTQ$_{LQ}$ with a lenient discretization holds its own against a finely discretized DTQ$_{TR}$ simulation. Here, DTQ$_{LQ}$ utilized a spatial distance between points that was more than three times larger than the DTQ$_{TR}$ procedure and a temporal step size that was four times as large while still achieving very similar results. The average number of points used by DTQ$_{LQ}$ was about 2,000 per time step, and the finely discretized DTQ$_{TR}$ simulation used over 120,000 per time step. If a less finely discretized grid was used for DTQ$_{TR}$, it would need to span about $[-13, 13] \times [-13, 13]$. Assuming the same spacing used by the DTQ$_{LQ}$ and using $\kappa = 0.25$, the coarser DTQ$_{TR}$ simulation would still require about 11,000 points for such a grid, which is still significantly larger than the DTQ$_{LQ}$ mesh size. Additionally, \textit{a priori} knowledge of the necessary grid domain size is needed for DTQ$_{TR}$, which depends on the end time.

In this simulation, the percent of Leja points reused from the previous time step averaged about 84\% per time step. The alternative procedure from \cref{AltMethodSection} was used for approximately 1.2\% of the mesh on average per time step. The computation of these metrics is discussed in \cref{sec:metrics}.

\Cref{fig:pieCharts} \hl{shows the breakdown of time spent in the DTQ$_{LQ}$ algorithm for the erf function drift example. } \Cref{fig:pieCharts}(left) \hl{shows the high level breakdown between the quadrature rule and the mesh updates. } \Cref{fig:pieCharts}(right) \hl{gives the breakdown of the time spent computing the different pieces of the quadrature rule.}

\begin{table}[h!]
     \centering
     \begin{tabular}{ll}
\begin{tikzpicture}[scale=0.5,line join=round]
  \pie [/tikz/every pin/.style={align=center},
        text=pin,rotate = 155, color = {
        orange, 
        teal!20,
        magenta!40},
    hide number]
    {67/ Quadrature\\ rule,
    28/Adding\\ points\\ (\cref{AddingBoundaryPoints}),
    5/Removing\\ points\\ (\cref{RemovingBoundaryPoints})
    }
\end{tikzpicture}
&
\begin{tikzpicture}[scale=0.5,line join=round]
  \pie [/tikz/every pin/.style={align=center},
        text=pin,rotate = 180, color = {
        yellow!90!black,
        green!60!black,
        blue!60, 
        red!70,
        gray!70, 
        teal!20},
    hide number]
    {38.6/Laplace\\ approximation\\ (\cref{QuadFit}),
    31.4/Quadrature\\ weights\\ (\cref{ssec:iquad}),
    25/Leja points\\ (\cref{LejaPointsSection}),
    5/Alternative\\ method\\ (\cref{AltMethodSection})
    }
\end{tikzpicture}
\end{tabular}

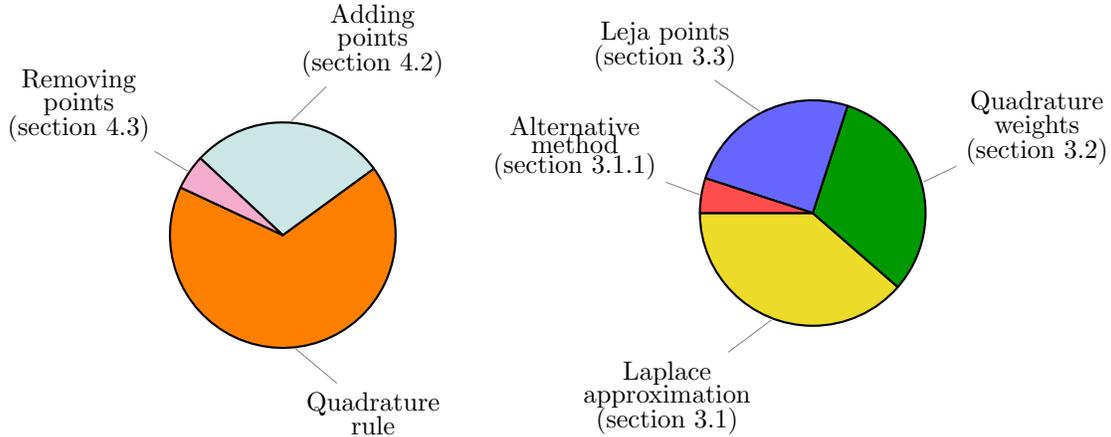
\captionof{figure}{\label{fig:pieCharts}
\hl{Breakdown of DTQ$_{LQ}$ algorithm for the erf function drift example in} \cref{ssec:four-hills}. \hl{(Left) Breakdown of time spent between approximating the Chapman-Kolmogorov equation and mesh updates. (Right) Breakdown of the algorithm components which make up the ``Quadrature rule'' slice of the left chart.}}
\label{samples}
\end{table}

\subsubsection{Spiral Example}
\label{ssec:spiral}
We now consider the solution to the SDE in \cref{eqn:SDE} in two dimensions where
\begin{align*}
  \mathbf{f}(\mathbf{x}) &=
\frac{5}{||\mathbf{x}||_2+10}\begin{pmatrix}
4\text{erf}(5x^{(1)})+2x^{(2)}\\
-2x^{(1)}+x^{(2)})
\end{pmatrix}, & 
\mathbf{g}(\mathbf{x}) = \begin{pmatrix}
0.6 & 0 \\
0 & 0.6\\
\end{pmatrix},
\end{align*}
with DTQ$_{LQ}$ simulation parameters $(\Delta{min}, \Delta{max}, \mathcal{R}, h, \beta) = (0.2, 0.2, 2, 0.02, 4)$ and the finely discretized DTQ$_{TR}$ simulation parameters were ($\kappa, h) = (0.05, 0.01)$. The solution to this problem features a single mass splitting into two and rotating in a clockwise spiral. 

\begin{figure}[h!]
    \centering
    \includegraphics[scale=0.55]{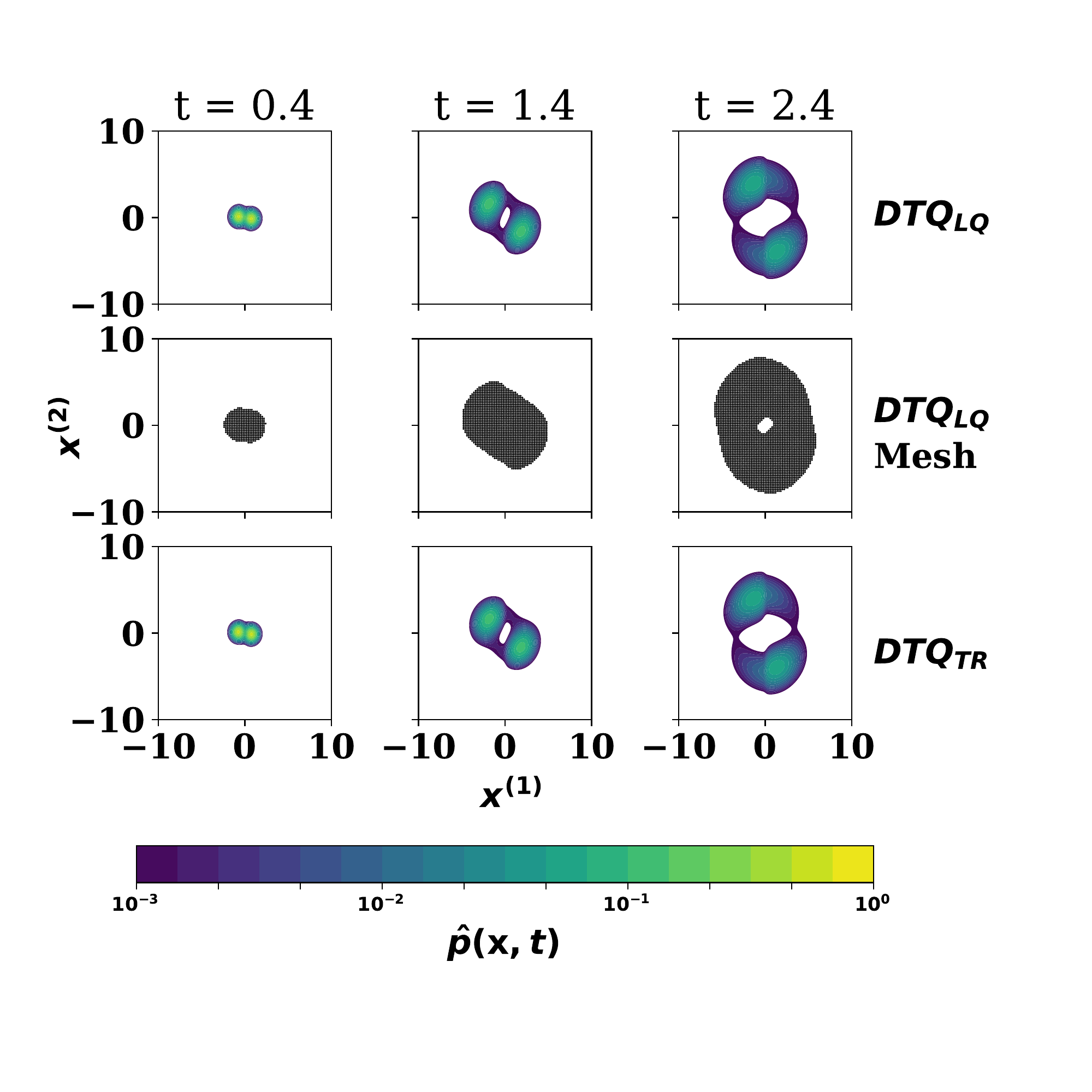}
    \caption{\textbf{Spiral example.} 
    The solution at the indicated times for the spiral drift example of \cref{ssec:spiral}. Top row: Density values of DTQ$_{LQ}$. Middle row: DTQ$_{LQ}$ mesh points. Bottom row: Density values of a finely discretized DTQ$_{TR}$.
    Density values below $10^{-3}$ are shown as white for visualization purposes.}
    \label{fig:Spiral}
\end{figure}

On average, DTQ$_{LQ}$ reused Leja points approximately 83\% of the time per time step, and the alternative method was used about 0.01\% of the time. The computation of these metrics is discussed in \cref{sec:metrics}. The computed solutions at various times are shown in \Cref{fig:Spiral}. We again see very similar results between the two procedures.

\subsubsection{Nonconstant Diffusion}\label{ssec:complex}
Now we consider the SDE in \cref{eqn:SDE} in two dimensions with a nonconstant diffusion and drift defined as
\begin{align*}
  \mathbf{f}(\mathbf{x}) &=
  \begin{pmatrix}
  2\text{erf}(10x^{(1)})\\
  0
  \end{pmatrix}, & 
  \mathbf{g}(\mathbf{x}) &= \begin{pmatrix}
  0.01(x^{(1)})^2+0.7 & 0.2 \\
  0.2 & 0.01(x^{(2)})^2+0.7\\
  \end{pmatrix}.
\end{align*}

\begin{figure}[h!]
    \centering
    \includegraphics[scale=0.55]{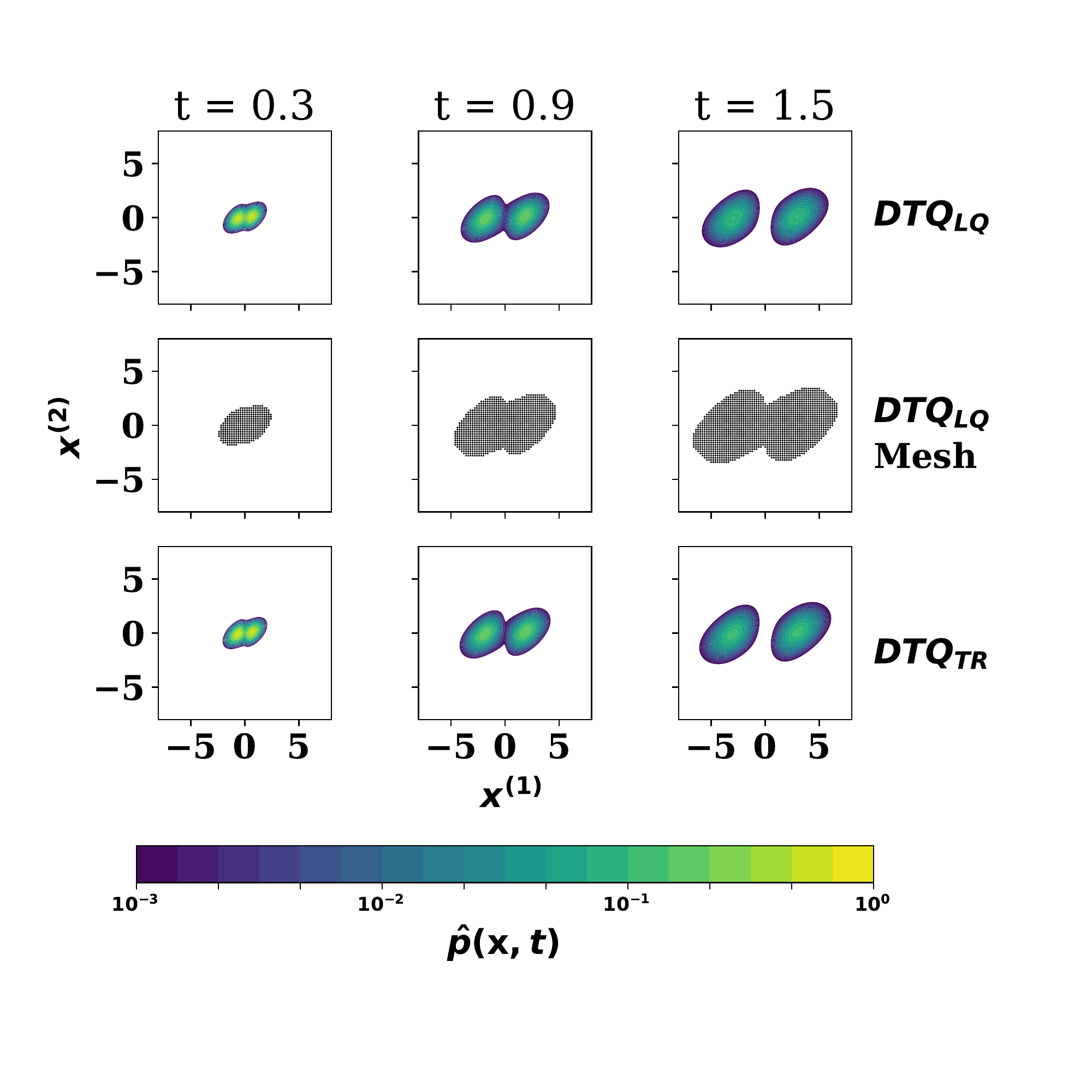}
    \caption{\textbf{Nonconstant diffusion example.} The solution at the indicated times for the nonconstant drift example of \cref{ssec:complex}. Top row: Density values of DTQ$_{LQ}$. Middle row: DTQ$_{LQ}$ mesh points. Bottom row: Density values of a finely discretized DTQ$_{TR}$.
    Density values below $10^{-3}$ are shown as white for visualization purposes.}
    \label{fig:complexDiff}
\end{figure}
This is the first example with a spatially dependent diffusion term. The DTQ$_{LQ}$ simulation parameter values were $(\Delta{min}, \Delta{max}, \mathcal{R}, h, \beta) = (0.2, 0.2, 2, 0.02, 4)$ and the finely discretized DTQ$_{TR}$ simulation parameters were ($\kappa, h) = (0.05, 0.01)$. On average, Leja points were reused approximately 87\% per time step, and the alternative method was used for 0\% of the mesh points for each time step. The computation of these metrics is discussed in \cref{sec:metrics}. The solution at various times is shown in \Cref{fig:complexDiff}.

 This example was included to illustrate that DTQ$_{LQ}$ can simulate a spatially dependent drift. We see very similar results between DTQ$_{LQ}$ and a finely discretized DTQ$_{TR}$.  
 
\subsection{Higher Dimensional Examples}\label{HigherDimensionalExample}
\subsubsection{Constant Drift and Diffusion}
\hl{We now consider the solution to the SDE in} \cref{eqn:SDE} \hl{with constant drift and diffusion in higher dimensions. For $N$ dimensions, we take}
\begin{align*}
  \mathbf{f}(\mathbf{x}) &= C_1\mathbf{e}_1, & 
    \mathbf{g}(\mathbf{x}) &= C_2\mathbf{I}_{N\times N}
\end{align*}
\hl{where $\mathbf{I}_{N\times N}$ is defined as the the $N \times N$-dimensional identity matrix and $\mathbf{e}_{1}$ is the cardinal $N$-dimensional unit vector which has a 1 as the first element and zeros elsewhere. This SDE corresponds to the Fokker-Planck PDE}
$$\frac{\partial p(\mathbf{x},t)}{\partial t}=-C_1\frac{\partial}{\partial x}p(\mathbf{x},t)+\sum_{i=1}^N\frac{C_2^2}{2}\frac{\partial^2}{\partial (x^{(i)})^2}p(\mathbf{x},t)$$ 
which has the exact solution  $$p(\mathbf{x},t)=\left(\frac{1}{4\pi (C_2^2/2)t}\right)^{N/2}\exp\left(\frac{-((x^{(1)}-C_1t)^2+\sum_{i=2}^N(x^{(i)})^2)}{4(C_2^2/2)t}\right).$$

\hl{For these examples, we take $C_1 = 1$ and $C_2=0.6$. In the three-dimensional example, we set simulation parameters as $(\Delta{min}, \Delta{max}, \mathcal{R}, h, \beta) = (0.22, 0.22, 1, 0.02, 3)$, in the four-dimensional example we set simulation parameters as $(\Delta{min}, \Delta{max}, \mathcal{R}, h, \beta) = (0.18, 0.18, 0.8, 0.02, 3)$, and in the the five-dimensional example we set simulation parameters as $(\Delta{min}, \Delta{max}, \mathcal{R}, h, \beta) = (0.1, 0.1, 0.5 , 0.01, 3)$. The results for these simulations are in} \Cref{tab:moving-hillHighDim}.

\begin{table}[h!]
\caption{\textbf{\hl{Constant drift and diffusion error for $N=3, 4, 5$ dimensions}}}\label{tab:moving-hillHighDim}
    \centering
    \begin{threeparttable}
        \begin{tabular}{c c c c c c}
            \toprule
            \midrule
            \multicolumn{1}{c}{$N$} & End Time & $h$& \mc{$L_{2p}$} Error &Starting Mesh Size & Ending Mesh Size\\
            \midrule
            3& 1& 0.02 & 0.00014 &  437 & 4,144\\ 
            4& 0.5& 0.02 & 0.00017 & 2,041 & 38,678\\ 
            5& 0.04 &0.01 & 0.0086  & 16,875 & 38,089\\ 
        \hline 
         \hline 
        \end{tabular}
    \end{threeparttable}
\end{table}

\hl{The purpose of this example is to illustrate that we can use DTQ$_{LQ}$ successfully in $N\geq 3$ dimensions. While we are able to demonstrate feasibility up to $N=5$ dimensions, computational constraints on memory play a role for such large dimensions. We estimate that the mesh size for DTQ$_{TR}$ in five dimensions that accurately covers the necessary support would be around 370,000 points, assuming a spatial grid spacing of 0.1 on a domain spanning from about $-0.6$ to $0.6$ in each dimension. }

\subsubsection{Three-dimensional Error Function Drift}
\hl{We consider the solution to the SDE in} \cref{eqn:SDE} \hl{in three dimensions with drift and constant diffusion}
\begin{align*}
  \mathbf{f}(\mathbf{x}) &= \left(\begin{array}{c} 2\text{erf}(10x^{(1)}) \\
  2\text{erf}(10x^{(2)})\\
  2\text{erf}(10x^{(3)}) \\
  \end{array}\right), & 
    \mathbf{g}(\mathbf{x}) &= 
    \left(\begin{array}{ccc} 
    0.75 & 0 & 0\\
    0 & 0.75 &0\\ 
    0 & 0& 0.75\end{array}\right).
\end{align*}
\hl{In this three-dimensional erf drift example, we use DTQ$_{LQ}$ simulation parameters defined as $(\Delta{min}, \Delta{max}, \mathcal{R}, h, \beta)$ $= (0.25, 0.25, 1, 0.02, 3)$. We visualize the density using a color scale from white to black. The darker the color, the lager the density. The solution at time $t=1.22$ is shown in }\Cref{fig:3derf}. \hl{We note that the adaptive mesh has broken into eight different clusters in order to capture areas of high density without needing mesh points in areas of low density.}
\begin{figure}[h!]
    \centering
    \includegraphics[scale=0.8]{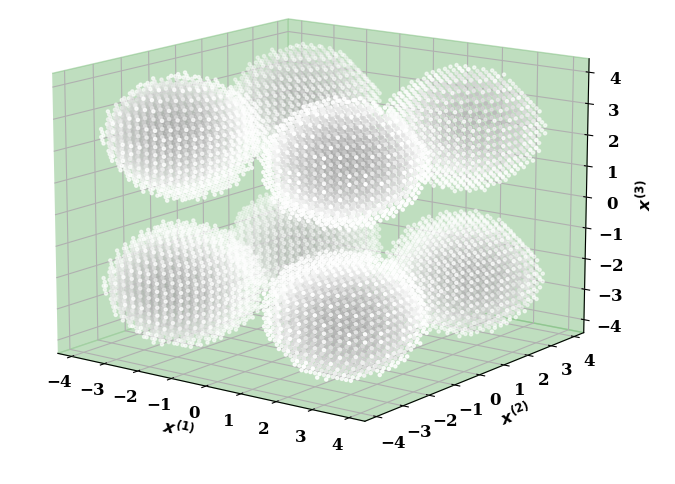}
    \caption{\textbf{\hl{Three-dimensional erf drift example.} The density is shown at $t=1.22$. The three spatial dimensions are plotted and the density is represented by the color of each point with the darker colors representing larger density.}}
    \label{fig:3derf}
\end{figure}

\section{Conclusion}
We have implemented an accurate and adaptive $N$-dimensional DTQ solver that uses fewer mesh points than current trapezoidal DTQ procedures and does not require \textit{a priori} knowledge of the necessary mesh coverage and domain. We included examples of the method when applied to problems from one to five dimensions. The results in \Cref{fig:T40Time} and \Cref{fig:T40Points} show that our adaptive DTQ procedure can be significantly more efficient while using substantially fewer mesh points when compared to DTQ using the trapezoidal quadrature rule on a tensorized grid.
\newpage

\begin{footnotesize}
\printbibliography[heading=none]

@article{edelsbrunner2010alpha,
  title={Alpha shapes—a survey},
  author={Edelsbrunner, H.},
  journal={Tessellations in the Sciences},
  volume={27},
  pages={1--25},
  year={2010}
}

@book{Bhat2018a,
abstract = {We derive and experimentally test an algorithm for maximum likelihood estimation of parameters in stochastic differential equations (SDEs). Our innovation is to efficiently compute the transition densities that form the log likelihood and its gradient, and to then couple these computations with quasi-Newton optimization methods to obtain maximum likelihood estimates. We compute transition densities by applying quadrature to the Chapman–Kolmogorov equation associated with a time discretization of the original SDE. To study the properties of our algorithm, we run a series of tests involving both linear and nonlinear SDE. We show that our algorithm is capable of accurate inference, and that its performance depends in a logical way on problem and algorithm parameters.},
author = {Bhat, H. S. and Madushani, R. W. M. A. and Rawat, S.},
booktitle = {Springer Proceedings in Mathematics and Statistics},
doi = {10.1007/978-3-319-76035-3_7},
file = {:C$\backslash$:/Users/Rylei/Downloads/978-3-319-76035-3{\_}7.pdf:pdf},
isbn = {9783319760346},
issn = {21941017},
keywords = {Maximum likelihood estimation,Parameter inference,Stochastic differential equations},
pages = {99--113},
publisher = {Springer International Publishing},
title = {{Parameter inference for stochastic differential equations with density tracking by quadrature}},
volume = {231},
year = {2018}
}

@article{Subramaniam2017,
abstract = {A novel path integral (PI) based method for solution of the Fokker–Planck equation is presented. The proposed method, termed the transformed path integral (TPI) method, utilizes a new formulation for the underlying short-time propagator to perform the evolution of the probability density function (PDF) in a transformed computational domain where a more accurate representation of the PDF can be ensured. The new formulation, based on a dynamic transformation of the original state space with the statistics of the PDF as parameters, preserves the non-negativity of the PDF and incorporates short-time properties of the underlying stochastic process. New update equations for the state PDF in a transformed space and the parameters of the transformation (including mean and covariance) that better accommodate nonlinearities in drift and non-Gaussian behavior in distributions are proposed (based on properties of the SDE). Owing to the choice of transformation considered, the proposed method maps a fixed grid in transformed space to a dynamically adaptive grid in the original state space. The TPI method, in contrast to conventional methods such as Monte Carlo simulations and fixed grid approaches, is able to better represent the distributions (especially the tail information) and better address challenges in processes with large diffusion, large drift and large concentration of PDF. Additionally, in the proposed TPI method, error bounds on the probability in the computational domain can be obtained using the Chebyshev's inequality. The benefits of the TPI method over conventional methods are illustrated through simulations of linear and nonlinear drift processes in one-dimensional and multidimensional state spaces. The effects of spatial and temporal grid resolutions as well as that of the diffusion coefficient on the error in the PDF are also characterized.},
author = {Subramaniam, G. M. and Vedula, P.},
doi = {10.1016/j.jcp.2017.06.002},
file = {:C$\backslash$:/Users/Rylei/Desktop/papers/subramaniam2017.pdf:pdf},
issn = {10902716},
journal = {Journal of Computational Physics},
keywords = {Fokker–Planck equation,Path integral,Short-time propagator,Stochastic differential equations,Uncertainty propagation},
pages = {49--70},
publisher = {Elsevier Inc.},
title = {{A transformed path integral approach for solution of the Fokker–Planck equation}},
volume = {346},
year = {2017}
}

@article{Razi2011,
abstract = {In this work the computational uncertainty quantification problem is formulated in terms of diffusionless Fokker-Planck equations and adaptive finite difference solutions are applied to obtain the time evolution of response probability density functions (PDFs). Two adaptive algorithms are developed for obtaining computational meshes which conform to the time evolving probability distribution. The results for the moments of the response variables are obtained through numerical integration of the obtained probability distributions. Results from each of these methods are compared to a fixed domain, non-adaptive finite difference solution, analytical solutions (when they exist) and Monte Carlo simulation to asses the impact, both in terms of accuracy and efficiency, of applying the adaptive schemes. These comparisons indicate that 1) formulating the computational uncertainty quantification problem in terms of Fokker-Planck equations yields accurate solutions for problems which are time dependent and nonlinear and 2) the adaptive methods give accurate solutions with an a order of magnitude decrease in the number of grid points as compared to the fixed domain solutions. These promising results indicate the possibility for applying the adaptive methods to overcome some of the open issues in computational uncertainty quantification including long time integration and discontinuous (in random space) response variables. Copyright {\textcopyright} 2011 by Mani Razi, Peter J. Attar, Prakash Vedula.},
author = {Razi, M. and Attar, P. J. and Vedula, P.},
doi = {10.2514/6.2011-1976},
file = {:C$\backslash$:/Users/Rylei/Desktop/papers/razi2011.pdf:pdf},
isbn = {9781600869518},
issn = {02734508},
journal = {Collection of Technical Papers - AIAA/ASME/ASCE/AHS/ASC Structures, Structural Dynamics and Materials Conference},
number = {April},
title = {{Adaptive numerical solutions of Fokker-Planck equations in computational uncertainty quantification}},
year = {2011}
}

@article{cotter2013adaptive,
  title={Adaptive finite element method assisted by stochastic simulation of chemical systems},
  author={Cotter, S. L. and Vejchodsky, T. and Erban, R.},
  journal={SIAM Journal on Scientific Computing},
  volume={35},
  number={1},
  pages={B107--B131},
  year={2013},
  publisher={SIAM}
}

@article{Harrison1988,
abstract = {Numerical solution of the Fokker Planck equation for the probability density function of a stochastic process by traditional finite difference or finite element methods produces erroneous oscillations and negative values whenever the drift is large compared to the diffusion. Upwinding schemes to eliminate the oscillations introduce false numerical diffusion because it is impossible to make the one step drift large enough to match the original equation without making the one step diffusion too large. A variation of the moving finite element method is presented that overcomes these difficulties by using basis functions that satisfy the drift part of the equation by moving along the trajectories of the deterministic dynamical system associated with the stochastic process. A Galerkin type method can then be used to find the coefficients in the remaining pure diffusion equation. Solutions of two test equations are presented to illustrate the effectiveness of the method. Copyright {\textcopyright} 1988 Wiley Periodicals, Inc.},
author = {Harrison, G. W.},
doi = {10.1002/num.1690040305},
file = {:C$\backslash$:/Users/Rylei/Desktop/papers/harrison1988.pdf:pdf},
issn = {10982426},
journal = {Numerical Methods for Partial Differential Equations},
number = {3},
pages = {219--232},
title = {{Numerical solution of the Fokker Planck equation using moving finite elements}},
volume = {4},
year = {1988}
}

@article{Ferm2004,
abstract = {The Fokker-Planck equation on conservation form is discretized by a finite volume method and advanced in time by a linear multistep method. The grid cells are refined and coarsened in blocks of the grid depending on an estimate of the spatial discretization error and the time step is chosen to satisfy a tolerance on the temporal discretization error. The solution is conserved across the block boundaries so that the total probability is constant. A similar effect is achieved by rescaling the solution. The steady state solution is determined as the eigenvector corresponding to the zero eigenvalue. The method is applied to the solution of a problem with two molecular species and the simulation of a circadian clock. Comparison is made with a stochastic method.},
author = {Ferm, L. and Lotstedt, P. and Sjoberg, P.},
file = {:C$\backslash$:/Users/Rylei/Desktop/papers/10.1.1.142.1212.pdf:pdf},
keywords = {65m20,65m50,adaptive method,ams subject classification,conservation,finite volume method,fokker-planck equation,probability density},
pages = {1--25},
title = {Adaptive, Conservative Solution Of The {F}okker-{P}lanck Equation},
year = {2004}
}

@article{Narayan2014,
author = {Narayan, A. and Jakeman, J. D.},
doi = {10.1137/140966368},
file = {:C$\backslash$:/Users/Rylei/AppData/Local/Mendeley Ltd./Mendeley Desktop/Downloaded/Narayan, Jakeman - 2014 - Adaptive Leja sparse grid constructions for stochastic collocation and high-dimensional approximation.pdf:pdf},
journal = {SIAM Journal on Scientific Computing},
keywords = {Leja sequences,Sparse grids,Stochastic collocation},
month = {4},
number = {6},
pages = {A2952-A2983},
publisher = {Society for Industrial and Applied Mathematics Publications},
title = {{Adaptive Leja sparse grid constructions for stochastic collocation and high-dimensional approximation}},
volume = {36},
year = {2014}
}

@article{Bhat2018,
arxivId = {1610.09572v4},
author = {Bhat, H. S. and Madushani, R. W. M. A.},
title = {Density Tracking by Quadrature for Stochastic Differential Equations},
year = {2018},
journal = {ArXiv}
}

@article{Bos2010,
author = {Bos, L. and De Marchi, S. and Sommariva, A. and Vianello, M.},
number = {5},
pages = {441-470},
title = {{Computing multivariate Fekete and Leja points by numerical linear algebra}},
journal = {Journal on Numerical Analysis},
volume = {48},
year = {2010}
}

@article{LejaClassicalEdrei,
author = {Edrei, A.},
title = {Sur les d\'eterminants r\'ecurrents et les singularit\'es d'une fonction donn\'ee par son d\'eveloppement de Taylor},
journal = {Compositio Mathematica},
publisher = {Johnson Reprint Corporation},
volume = {7},
year = {1940},
pages = {20-88}
}

@article{Leja1957,
author = {Leja, F.},
journal = {Annales Polonici Mathematici},
number = {1},
pages = {8-13},
title = {Sur certaines suites liées aux ensembles plans et leur application à la représentation conforme},
volume = {4},
year = {1957},
}

@manual{PyApprox,
  title  = {Adaptive Leja Sequences},
  author = {Jakeman, J. D.},
  year   = {2019}
}

@book{Risken,
series = {Springer series in synergetics ; v. 18},
isbn = {354061530X},
year = {1989},
title = {The Fokker-Planck equation : methods of solution and applications},
edition = {2nd ed.},
language = {eng},
author = {Risken, H.},
keywords = {Fokker-Planck equation},
lccn = {96033182},
}

@inbook{Pichler,
author = {Pichler, L. and Masud, A. and Bergman, L.},
year = {2013},
month = {01},
pages = {69-85},
title = {Numerical Solution of the Fokker-Planck Equation by Finite Difference and Finite Element Methods-A Comparative Study},
doi = {10.1007/978-94-007-5134-7_5},
book = {Computational Methods in Stochastic Dynamics},
volume = {2},
pages = {69-85}
}

@inproceedings{Wojtkiewicz,
  title={Numerical solution of the four-dimensional nonstationary Fokker-Planck equation},
  author={Wojtkiewicz, S. F. and Bergman, L. A. and Spencer, B. F. and Johnson, E. A.},
  booktitle={IUTAM Symposium on Nonlinearity and Stochastic Structural Dynamics},
  pages={271--287},
  year={2001},
  organization={Springer}
}

@article{delaunay1934sphere,
  title={Sur la sphere vide},
  author={Delaunay, B.},
  journal={Izv. Akad. Nauk SSSR, Otdelenie Matematicheskii i Estestvennyka Nauk},
  volume={7},
  number={793-800},
  pages={1--2},
  year={1934}
}

@inproceedings{bhat20162DTQ,
  title={Bayesian inference of stochastic pursuit models from basketball tracking data},
  author={Bhat, H. S. and Madushani, R. W. M. A. and Rawat, S.},
  booktitle={International Conference on Bayesian Statistics in Action},
  pages={127--137},
  year={2016},
  organization={Springer}
}

@article{xu2020ML,
  title={Solving Fokker-Planck equation using deep learning},
  author={Xu, Y. and Zhang, H. and Li, Y. and Zhou, K. and Liu, Q. and Kurths, J.},
  journal={Chaos: An Interdisciplinary Journal of Nonlinear Science},
  volume={30},
  number={1},
  pages={013133},
  year={2020},
  publisher={AIP Publishing LLC}
}

@article{Bergman1982,
author = {Bergman, L. A. and Heinrich, J. C.},
doi = {10.1002/nme.1620180902},
file = {:C$\backslash$:/Users/Rylei/Downloads/bergman1982.pdf:pdf},
issn = {10970207},
journal = {International Journal for Numerical Methods in Engineering},
mendeley-groups = {OralExam},
number = {9},
pages = {1271--1295},
title = {{On the reliability of the linear oscillator and systems of coupled oscillators}},
volume = {18},
year = {1982}
}

@article{Langley1985,
abstract = {The transitional probability density function for the random response of a certain class of non-linear system satisfies the Fokker-Planck-Kolmogorov equation. This paper concerns the numerical solution of the stationary form of this equation, yielding the stationary probability density function of response. The weighted residual statement for the problem is integrated by parts to yield the weak form of the equations, which are then solved by the finite element method. The method is applied to a Duffing oscillator and good agreement is found with the exact result, and the method is compared favourably with a Galerkin solution method given by Bhandari and Sherrer [1]. Also, the method is applied to the ship rolling problem and good agreement is found with an approximate analytical result due to Roberts [2]. {\textcopyright} 1985 Academic Press Inc. (London) Limited.},
author = {Langley, R. S.},
doi = {10.1016/S0022-460X(85)80037-7},
file = {:C$\backslash$:/Users/Rylei/Downloads/langley1985.pdf:pdf},
issn = {10958568},
journal = {Journal of Sound and Vibration},
mendeley-groups = {OralExam},
number = {1},
pages = {41--54},
title = {{A finite element method for the statistics of non-linear random vibration}},
volume = {101},
year = {1985}
}

@article{masud2004multiscale,
  title={A multiscale/stabilized finite element method for the advection--diffusion equation},
  author={Masud, A. and Khurram, R. A.},
  journal={Computer Methods in Applied Mechanics and Engineering},
  volume={193},
  number={21-22},
  pages={1997--2018},
  year={2004},
  publisher={Elsevier}
}

@article{Kumar2006,
abstract = {The response of a structural system to white noise excitation (deltacorrelated) constitutes a Markov vector process whose transitional probability density function (TPDF) is governed by both the forward Fokker-Planck and backward Kolmogorov equations. Numerical solution of these equations by finite element and finite difference methods for dynamical systems of engineering interest has been hindered by the problem of dimensionality. In this paper numerical solution of the stationary and transient form of the Fokker-Planck (FP) equation corresponding to two state nonlinear systems is obtained by standard sequential finite element method (FEM) using C° shape function and Crank-Nicholson time integration scheme. The method is applied to Van-der-Pol and Duffing oscillators providing good agreement between results obtained by it and exact results. An extension of the finite difference discretization scheme developed by Spencer, Bergman and Wojtkiewicz is also presented. This paper presents an extension of the finite difference method for the solution of FP equation up to four dimensions. The difficulties associated in extending these methods to higher dimensional systems are discussed.},
author = {Kumar, P. and Narayanan, S.},
doi = {10.1007/BF02716786},
file = {:C$\backslash$:/Users/Rylei/Downloads/kumar2006.pdf:pdf},
issn = {02562499},
journal = {Sadhana - Academy Proceedings in Engineering Sciences},
keywords = {Finite difference method,Finite element method,Fokker-Planck equation,Nonlinear stochastic dynamics,Random vibration},
mendeley-groups = {OralExam},
number = {4},
pages = {445--461},
title = {{Solution of {F}okker-{P}lanck equation by finite element and finite difference methods for nonlinear systems}},
volume = {31},
year = {2006}
}

@incollection{kloeden1992applications,
  title={Applications of Stochastic Differential Equations},
  author={Kloeden, P. E. and Platen, E.},
  booktitle={Numerical Solution of Stochastic Differential Equations},
  pages={253--275},
  year={1992},
  publisher={Springer}
}

@inproceedings{kumar2006partition,
  title={The partition of unity finite element approach to the stationary {F}okker-{P}lanck equation},
  author={Kumar, M. and Singla, P. and Chakravorty, S. and Junkins, J.},
  booktitle={AIAA/AAS Astrodynamics Specialist Conference and Exhibit},
  pages={6285},
  year={2006}
}

@article{wehner1983numerical,
  title={Numerical evaluation of path-integral solutions to Fokker-Planck equations},
  author={Wehner, M. F. and Wolfer, W. G.},
  journal={Physical Review A},
  volume={27},
  number={5},
  pages={2663},
  year={1983},
  publisher={APS}
}

@incollection{platen2010monte,
  title={Monte Carlo Simulation of SDEs},
  author={Platen, E. and Bruti-Liberati, N.},
  booktitle={Numerical Solution of Stochastic Differential Equations with Jumps in Finance},
  pages={477--505},
  year={2010},
  publisher={Springer}
}

@article{spencer1993numerical,
  title={On the numerical solution of the Fokker-Planck equation for nonlinear stochastic systems},
  author={Spencer, B. F. and Bergman, L. A.},
  journal={Nonlinear Dynamics},
  volume={4},
  number={4},
  pages={357--372},
  year={1993},
  publisher={Springer}
}

@inproceedings{wojtkiewicz2000numerical,
  title={Numerical solution of high dimensional Fokker-Planck equations},
  author={Wojtkiewicz, S. F. and Bergman, L. A.},
  booktitle={8th ASCE Specialty Conference on Probablistic Mechanics and Structural Reliability, Notre Dame, IN, USA},
  year={2000},
  organization={Citeseer}
}

@article{bos_geometric_2011,
	title = {Geometric weakly admissible meshes, discrete least squares approximations and approximate {Fekete} points},
	volume = {80},
	issn = {0025-5718},
	number = {275},
	urldate = {2012-02-14},
	journal = {Mathematics of Computation},
	author = {Bos, L. and Calvi, J.-P. and Levenberg, N. and Sommariva, A. and Vianello, M.},
	month = jan,
	year = {2011},
	pages = {1623--1638.},
}

@article{bos_weakly_2011,
	title = {Weakly {Admissible} {Meshes} and {Discrete} {Extremal} {Sets}},
	volume = {4},
	journal = {Numerical Mathematics: Theory, Methods and Applications},
	author = {Bos, L. and De Marchi, S. and Sommariva, A. and Vianello, M.},
	year = {2011},
	pages = {1--12},
}

@article{xu_randomized_2021,
	title = {Randomized weakly admissible meshes},
	url = {http://arxiv.org/abs/2101.04043},
	journal = {arXiv:2101.04043 [cs, math, stat]},
	author = {Xu, Y. and Narayan, A.},
	month = jan,
	year = {2021},
	note = {arXiv: 2101.04043},
	annote = {Comment: 20 pages, 7 figures},
}

@misc{DTQ-Github,
  author = {Moore, R. A.},
  title = {AdaptiveDTQ},
  year = {2021},
  publisher = {GitHub},
  journal = {GitHub repository},
  howpublished = {\url{https://github.com/RyleighAMoore/AdaptiveDTQND}}
  %commit = {4f57d6a0e4c030202a07a60bc1bb1ed1544bf679}
}

@article{NAESS1993,
title = {Response statistics of nonlinear, compliant offshore structures by the path integral solution method},
journal = {Probabilistic Engineering Mechanics},
volume = {8},
number = {2},
pages = {91-106},
year = {1993},
issn = {0266-8920},
doi = {https://doi.org/10.1016/0266-8920(93)90003-E},
author = {A. Naess and J. M. Johnsen},
}

@article{Linetsky1997,
  title={The Path Integral Approach to Financial Modeling and Options Pricing},
  author={Vadim Linetsky},
  journal={Computational Economics},
  year={1997},
  volume={11},
  pages={129-163}
}

@article{Yu1997,
  title={A new path integration procedure based on {G}auss-{L}egendre scheme},
  author={J. S. Yu and G. Q. Cai and Y. K. Lin},
  journal={International Journal of Non-linear Mechanics},
  year={1997},
  volume={32},
  pages={759-768}
}

@article{RosaClot1999,
  title={A Path Integral Approach to Derivative Security Pricing: II. Numerical Methods},
  author={M. Rosa-Clot and S. Taddei},
  journal={International Journal of Theoretical and Applied Finance},
  year={1999},
  volume={05},
  pages={123-146}
}

@article{Skaug2007,
  title={Fast and accurate pricing of discretely monitored barrier options by numerical path integration},
  author={C. Skaug and A. Naess},
  journal={Computational Economics},
  year={2007},
  volume={30},
  pages={143-151}
}

@article{chen2018,
  title={On numerical density approximations of solutions of {SDE}s with unbounded coefficients},
  author={Chen, L. and Jakobsen, E. R. and Naess, A.},
  journal={Advances in Computational Mathematics},
  volume={44},
  number={3},
  pages={693--721},
  year={2018},
  publisher={Springer}
}

@article{BALLYTALAY+1996+93+128,
author = {Vlad Bally and Denis Talay},
year = {1995},
month = {12},
pages = {},
title = {The Law of the {E}uler Scheme for Stochastic Differential Equations: {II}. {C}onvergence Rate of the Density},
volume = {2},
journal = {Monte Carlo Methods and Applications},
doi = {10.1515/mcma.1996.2.2.93}
}
\end{footnotesize}
\end{document}